\newtheorem{theorem}{Theorem}[section]
\newtheorem{question}[theorem]{Question}
\newtheorem{lemma}[theorem]{Lemma}
\newtheorem{proposition}[theorem]{Proposition}
\newtheorem{corollary}[theorem]{Corollary}
\theoremstyle{definition}
\newtheorem{definition}[theorem]{Definition}
\newtheorem{example}[theorem]{Example}
\begin{document}
\author[M. Nasernejad, S. Bandari,   and  L. G. Roberts]{Mehrdad ~ Nasernejad$^{1,*}$,   Somayeh Bandari$^{2}$,   and Leslie G. Roberts$^{3}$}
\title[Closed  neighborhood ideals and dominating ideals]{Normality and associated primes of Closed  neighborhood ideals and dominating ideals}
\subjclass[2010]{13B22, 13B25, 13F20, 05C25, 05E40.}
\keywords {Closed neighborhood ideals, Dominating ideals, Bipartite graphs, Cycles, Normality, Strong persistence property.}
\thanks{$^*$Corresponding author}

\thanks{E-mails:  m$\_$nasernejad@yahoo.com, s.bandari@bzte.ac.ir, robertsl@queensu.ca}
\maketitle

\begin{center}
{\it
$^{1}$Univ. Artois, UR 2462, Laboratoire de Math\'{e}matique de  Lens (LML), \\  F-62300 Lens, France \\
%\item \href{https://orcid.org/0000-0003-1073-1934}{\textcolor{orcidlogocol}{\aiOrcid} \hspace{2mm}orcid.org/0000-0003-1073-1934}\\
  $^2$Department of Mathematics, Buein Zahra Technical University, \\
  Buein Zahra, Qazvin, Iran\\
  $^3$Department of Mathematics and Statistics,
Queen's University, \\
Kingston, Ontario, Canada, K7L 3N6
}
\end{center}

\vspace{0.4cm}

\begin{abstract}
In this paper, we first give some  sufficient  criteria for normality of monomial ideals. As applications, we show that closed
neighborhood ideals of complete bipartite graphs are normal, and hence  satisfy the (strong) persistence property.
We also prove that  dominating ideals of complete bipartite graphs are nearly normally torsion-free. In addition,  we show that dominating ideals of $h$-wheel graphs, under certain condition,  are normal.
\end{abstract}
\vspace{0.4cm}

%+++++++++++++++++++++++++++++++++++++++++++++
%+++++++++++++++++++++++++++++++++++++++++++++
\section{Introduction}

Broadly speaking, monomial ideals play an essential role in studying the connection between combinatorics and  commutative algebra. In fact, the relation between these two  fields  enables  us to employ  techniques and methods in commutative algebra to probe   combinatorial problems, and vice versa. Hence,  commutative algebraists have initiated  exploring the properties of   combinatorial objects such as  graphs, hypergraphs, simplicial complexes, and posets 
 through monomial ideals. 

One of the innovators  in this area is  Villarreal \cite{V} who  introduced the concept  of edge ideals. Indeed, let $G=(V(G), E(G))$  be a finite simple graph on the vertex set $V(G)=\{1,\ldots, n\}$, that is, $G$ has no loops and no multiple edges. In addition,  let  $R = K[x_1,\ldots,x_n]$ be a polynomial ring over a field $K$  in $n$ variables.
Then  the edge ideal $I(G) \subset K[x_1,\ldots,x_n]$  is generated by all monomials $x_ix_j$ such that $\{i,j\}\in E(G)$.
Furthermore, the cover  ideal of $G$, denoted by $J(G)$,  is generated by monomials that correspond to vertex covers of $G$, where  a vertex cover is a  set of  vertices that contains  at least one vertex from each edge. It is well-known that $J(G)$ is the Alexander dual of $I(G)$, that is, $J(G)=I(G)^\vee$.
The  algebraic invariants of edge and cover ideals have been investigated by many researchers. For example, it  has already  been proved  in \cite{SVV} that the edge ideals of bipartite graphs are normal and also  the cover ideals of perfect graphs are normal   \cite[Corollary 14.6.25]{V1}. Recall  that a graph is perfect  if  and only if the graph and its complement do not contain any odd cycle of length at least five as an induced subgraph \cite[Theorem 14.18]{BM}.
On the other hand, generally, identifying  classes of monomial ideals which have the (strong) persistence property is hard. Based on  \cite[Example  7.7.18]{V1},  there exist  square-free monomial ideals which do  not satisfy the persistence property.  It follows  from  \cite[Theorem 7.7.14]{V1} that all edge ideals of finite simple graphs have the strong persistence property; moreover, this result is true for every  finite graph with loops \cite{RT}.   In addition, according to   \cite{FHV2}, the cover ideals of perfect graphs   satisfy the persistence property.  Furthermore, more recently, it has been shown that the cover ideals of theta graphs and Jahangir's graphs are normal and satisfy the  strong persistence property, consult \cite{ANR1, ANR2}.

In 2020,   Sharifan and Moradi  \cite{SM} continued  to  explore   the connection between combinatorics and commutative algebra by  introducing  the notion of closed neighborhood ideals and dominating ideals of graphs. In particular, they   calculated some algebraic invariants of  these  ideals such as regularity and projective dimension in terms of the information from the underlying graph. Next, Honeycutt  and    Sather-Wagstaff, in \cite{HS},  investigated the   Cohen-Macaulay, unmixed, and complete intersection properties of closed neighborhood ideals.

 In this paper, we concentrate on the normality,  strong persistence property, persistence property, and symbolic strong persistence property of closed neighborhood ideals and dominating ideals of some classes of  graphs. Although, it should be observed that most of the results in this paper
are about the normality of the ideals, which then implies the persistence and strong persistence properties as consequences.
 In particular, one can detect that some properties of edge ideals or cover ideals cannot be developed  to closed neighborhood ideals  or  dominating ideals of graphs. For example, according to \cite[Lemma 4.2]{NQ}, the dominating ideal of the cycle graph $C_9$ is  normally torsion-free, while both of the edge ideal and cover ideal of $C_9$ are not normally torsion-free. This shows that studying of algebraic invariants   of    closed neighborhood ideals and dominating ideals of graphs are important and we cannot extend the results related to edge or cover ideals to closed neighborhood ideals or  dominating ideals. 
  One of our main aims in  this paper is to  introduce some  classes of graphs such that  their closed neighborhood ideals or dominating ideals are normal and satisfy the (symbolic) (strong) persistence property.

This paper is organized as follows.  In Section \ref{Preliminaries}, we give all required definitions, notions, and properties which we need throughout this paper.
In section  \ref{Criteria}, we supply several  sufficient   criteria which help us to detect the normality of monomial ideals. The results of this section will be used in studying  closed  neighborhood ideals and dominating ideals of  some classes of graphs. The main results of this section  are  as follows:

 \noindent{\bf  Theorem 3.1.}
Let $I$ and $H$  be two  normal  monomial ideals in a  polynomial ring $R=K[x_1, \ldots, x_n]$
such that $I+H$ is normal.
  Let  $x_{d} \in \{x_1, \ldots, x_n\}$ be  a variable with
 $\mathrm{gcd}(v,x_{d})=1$  for all $v\in \mathcal{G}(I)\cup  \mathcal{G}(H)$, and $c$ be a positive  integer number.
 Then the following statements hold:
\begin{itemize}
\item[(i)]   $L:=I+x^c_{d}H$  is normal.
\item[(ii)]  $L:=I+x^c_{d}H$  has the strong persistence property.
\item[(iii)]  $L:=I+x^c_{d}H$  has the persistence property.
\item[(iv)]  $L:=I+x^c_{d}H$  has the symbolic persistence property.
\end{itemize}
 
  \noindent{\bf  Proposition  3.10.}
Let $I \subset  R=K[x_{1},\ldots ,x_{n}, x_{n+1}]$ be a normal square-free monomial ideal
such that $I \cap (x_n)  + (I:_Rx_{n+1})$ is normal. Then   $L:=I \cap (x_{n},x_{n+1}) $ is normal.

\bigskip
Section \ref{c-b graphs} is concerned with closed  neighborhood ideals and dominating ideals of complete bipartite graphs. We show that closed  neighborhood ideals of complete bipartite graphs are normal and have the (strong) persistence property
(Theorem \ref{NI-Bipartite}). In addition,   we  explore when the irrelevant  ideal appears in the set of associated primes  of powers of closed neighborhood ideals of complete bipartite graphs (Proposition \ref{Maximal-Bipartite}).
One of the main result of this section is as follows:

 \noindent{\bf  Theorem  4.4.}
The dominating  ideals of  complete bipartite graphs are  nearly normally torsion-free.
 
 \bigskip
 Section \ref{wheel} is devoted to studying dominating ideals of $h$-wheel graphs. We prove that dominating ideals of
cycle graphs  are normal, see Corollary \ref{Normality-Cycle}. In addition, the following theorem, which is one of the main result of this section, 
 states that  dominating ideals of $h$-wheel graphs  that, under certain condition,  are normal:

 \noindent{\bf  Theorem  5.8.}
Let $G$ be an $h$-wheel graph  with  rim  $R^G$ and   center  $C^G$ such that
 $V(C^G)=\{y_1, \ldots, y_h\}$ and $V(R^G)=\{x_1, \ldots, x_{2m+1}\}$, where $m\geq 2$.  Also, let  $x_{\ell_1}, \ldots, x_{\ell_k}$ be the radial vertices such that  there exist at least three consecutive numbers among them. Let $DI(G)$ denote the  dominating ideal of $G$. Then the following statements hold:
\begin{itemize}
\item[(i)]   $DI(G)$  is normal.
\item[(ii)]  $DI(G)$  has the strong persistence property.
\item[(iii)]  $DI(G)$  has the persistence property.
\end{itemize}

\bigskip
Throughout this paper,  we denote the unique minimal set of monomial generators of a  monomial ideal $I$  by $\mathcal{G}(I)$. Also, $R=K[x_1,\ldots, x_n]$ is a polynomial ring over a field $K$ and $x_1, \ldots, x_n$ are indeterminates.
A simple graph $G$ means that $G$ has  no loop and no multiple edge. All graphs in this paper are undirected.
Moreover, if $G$ is a finite simple graph, then $NI(G)$  (respectively, $DI(G)$)  stands for the closed neighborhood ideal (respectively, dominating ideal) of  $G$.

%++++++++++++++++++++++++++++++++++++++++++++++++++++++++++++++++++++++++++++++++++++++

\section{Preliminaries} \label{Preliminaries}

In this section we review the definitions and notions which we need in the rest of this paper. We begin with the definition of nomality.

\begin{definition}
Let $S$ be a ring and $I$ be an ideal in $S$. An element $f\in S$ is {\it integral} over $I$, if there exists an equation
 $$f^k+c_1f^{k-1}+\cdots +c_{k-1}f+c_k=0 ~~\mathrm{with} ~~ c_i\in I^i.$$
 The set of elements $\overline{I}$ in $S$ which are integral over $I$ is the
 {\it integral closure} of $I$. The ideal $I$ is  called {\it integrally closed}, if $I=\overline{I}$, and $I$ is said to be  {\it normal} if all powers of $I$ are integrally closed.
 \end{definition}
 If   $I$ is  a monomial ideal in a
polynomial ring $R$, then  $\overline{I}$ is the monomial ideal generated by all monomials $u \in R$ for which there exists an integer $k$ such that   $u^{k}\in I^{k}$,
refer to    \cite[Theorem 1.4.2]{HH1}.  \par
  Here, we turn our attention to  notions which are related to the  associated primes of powers of ideals.
  Suppose that  $R$ is  a commutative Noetherian ring and $I$  an ideal of $R$. A prime ideal $\mathfrak{p}\subset  R$ is an {\it associated prime} of $I$ if there exists an element $v$ in $R$ such that $\mathfrak{p}=(I:_R v)$, where $(I:_Rv)=\{r\in R~:~ rv\in I\}$. The   set of associated primes  of $I$, denoted by  $\mathrm{Ass}_R(R/I)$, is the set of all prime ideals associated to  $I$. 
    In \cite{BR},  Brodmann  proved  that the sequence $\{\mathrm{Ass}_R(R/I^k)\}_{k \geq 1}$ of associated prime ideals is stationary for large $k$, that is to say,  there exists a positive integer $k_0$ such that  $\mathrm{Ass}_R(R/I^k)=\mathrm{Ass}_R(R/I^{k_0})$ for all integers $k\geq k_0$. The minimal   such $k_0$ is called the {\it index of stability}  of $I$ and $\mathrm{Ass}_R(R/I^{k_0})$ is called the {\it stable set}  of associated prime ideals of $I$, which is denoted by $\mathrm{Ass}^{\infty }(I)$.
  \begin{definition} 
An ideal $I$ of $R$ satisfies the {\it persistence property} if
 $\mathrm{Ass}_R(R/I^k) \subseteq \mathrm{Ass}_R(R/I^{k+1})$ for all positive integers $k$.
 \end{definition}
 \begin{definition}(\cite{N2})
An ideal $I$ of $R$ has the {\it strong persistence property} if $(I^{k+1}:_R I)=I^k$ for all  $k\geq 1$.
\end{definition}
\begin{definition}(\cite{KNT, NKRT})
We say that $I$  has the {\it symbolic strong persistence property}  if
$(I^{(k+1)}: I^{(1)})=I^{(k)}$ for all $k$, where $I^{(k)}$ denotes the  $k$-th symbolic  power  of $I$.
\end{definition}
\begin{definition}(\cite[Definition 2.1]{Claudia})
 A monomial ideal $I$ in a polynomial  ring $R$  is called {\it nearly normally torsion-free}  if there exist a positive integer $k$ and a monomial prime ideal  $\mathfrak{p}$ such that $\mathrm{Ass}_R(R/I^m)=\mathrm{Min}(I)$ for all $1\leq m\leq k$, and
 $\mathrm{Ass}_R(R/I^m) \subseteq \mathrm{Min}(I) \cup \{\mathfrak{p}\}$ for all $m \geq k+1$, where  $\mathrm{Min}(I)$  denotes  the set of minimal prime ideals of $I$.
 \end{definition}
 It should be noted that, according to   \cite[Theorem 6.2]{RNA}, every normal monomial ideal has the strong persistence property. Also, it follows from
  \cite[Proposition 2.1]{NQBM} that  the strong persistence property implies the persistence property. Hence, we deduce that  normal $\Rightarrow$ strong persistence property $\Rightarrow$ persistence property. In addition, one can derive  from \cite[Theorem 11]{RT} that  the strong persistence property yields  the symbolic strong persistence property.

Suppose that  $G$ is  a finite simple graph with the vertex set $V(G)=\{1, \ldots, n\}$ and  the edge set $E(G)$. The {\it closed neighborhood} of a vertex $v\in V(G)$ is  $N_G[v]=\{v\} \cup \{u~:~\{u,v\}\in E(G)\}.$  The following definitions and results come from \cite{SM}.  
\begin{definition}
The  {\it  closed neighborhood ideal} of $G$, denoted by $NI(G)$,  has been defined as
$$NI(G)= (\prod_{j \in N_G[i]} x_j:  i \in V(G))\subset R=K[x_1, \ldots, x_n].$$
A subset $S\subseteq V(G)$ is called a {\it dominating set}
 of $G$ if $S\cap N_G[v]\neq \emptyset$ for any $v\in V(G)$.
 Furthermore, $S$ is called a  {\it minimal dominating set} of $G$  if it is a dominating set of $G$  and  no proper subset of $S$ is a dominating set of $G$.  The {\it dominating ideal} of $G$ has been  defined as
 $$DI(G)=(\prod_{i \in S}x_i: S \text{ is a minimal dominating set of } G) \subset R=K[x_1, \ldots, x_n].$$
\end{definition}
 If  $u=x_{1}^{a_{1}}\cdots x_{n}^{a_{n}}$ is  a
monomial in a polynomial ring $R$, then
the {\it support} of $u$ is given by $\mathrm{supp}(u):=\{x_{i}|~a_{i}>0\}$. In addition, for a monomial ideal $I$, we set $\mathrm{supp}(I):=\bigcup_{u \in \mathcal{G}(I)}\mathrm{supp}(u)$.
We recall  that    for any  square-free monomial ideal $I\subset R$,  the {\it Alexander dual} of  $I$, denoted by $I^\vee$,  is given by
 $$I^\vee= \bigcap_{u\in \mathcal{G}(I)} (x_i~:~ x_i \in \mathrm{supp}(u)).$$

 By virtue of \cite[Lemma 2.2]{SM}, we have $DI(G)$ is the Alexander dual of $NI(G)$, that is, $DI(G)=NI(G)^\vee$.

%%%%%%%%%%%%%%%%%%%%%%%%%%%%%%%%%%%%%%%%%%%%%%%%%%%%%%%%
\section{Some   sufficient  criteria for normality of monomial ideals}
\label{Criteria}
This section is devoted to  present some  sufficient  criteria  which help us to detect the normality of monomial ideals. The results of this section will be used in studying  closed  neighborhood ideals and dominating ideals of  some classes of  graphs.

%%%%%%%%%%%%%%%%%%%%%%%%%%%%%%%%%%%%%%%%%%%%%%%%%%
 The following theorem is essential for us to verify Theorem \ref{I+hH}  and   Proposition \ref{Pro.vI+wJ}.
 It should be noted that Theorem \ref{I+xcH} is an updated version of \cite[Theorem 3.1]{NQBM}. In fact, in Theorem \ref{I+xcH}, we focus on the monomial ideals, while in \cite[Theorem 3.1]{NQBM}, the authors argued on square-free monomial ideals.

\begin{theorem} \label{I+xcH}
Let $I$ and $H$  be two  normal  monomial ideals in a  polynomial ring $R=K[x_1, \ldots, x_n]$
such that $I+H$ is normal.
  Let  $x_{d} \in \{x_1, \ldots, x_n\}$ be  a variable with
 $\mathrm{gcd}(v,x_{d})=1$  for all $v\in \mathcal{G}(I)\cup  \mathcal{G}(H)$, and $c$ be a positive  integer number.
 Then the following statements hold:
\begin{itemize}
\item[(i)]   $L:=I+x^c_{d}H$  is normal.
\item[(ii)]  $L:=I+x^c_{d}H$  has the strong persistence property.
\item[(iii)]  $L:=I+x^c_{d}H$  has the persistence property.
\item[(iv)]  $L:=I+x^c_{d}H$  has the symbolic persistence property.
\end{itemize}
 \end{theorem}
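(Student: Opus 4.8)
My plan is to prove part (i) in full and then harvest (ii)--(iv) essentially for free. Once $L$ is known to be normal, the implication chain recorded in the Preliminaries applies verbatim: by \cite[Theorem 6.2]{RNA} a normal monomial ideal has the strong persistence property, which gives (ii); by \cite[Proposition 2.1]{NQBM} the strong persistence property implies the persistence property, giving (iii); and by \cite[Theorem 11]{RT} the strong persistence property yields the symbolic strong persistence property, hence in particular the symbolic persistence property asserted in (iv). So all the genuine content sits in part (i).

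To prove (i), I will verify $\overline{L^m}=L^m$ for every $m\ge 1$; since $L^m\subseteq\overline{L^m}$ always holds, it suffices to establish $\overline{L^m}\subseteq L^m$. Fix a monomial $w\in\overline{L^m}$, let $a:=\deg_{x_d}(w)$, and let $w':=w/x_d^{a}$ be its $x_d$-free part. Expanding
\[
L^{m}=(I+x_d^{c}H)^{m}=\sum_{j=0}^{m}x_d^{cj}\,I^{m-j}H^{j},
\]
and using that $x_d$ divides no element of $\mathcal G(I)\cup\mathcal G(H)$, one checks the clean membership criterion
\[
w\in L^{m}\iff \exists\, j\in\{0,\dots,m\}\ \text{with}\ cj\le a\ \text{and}\ w'\in I^{m-j}H^{j}.
\]
Thus the whole problem reduces to producing one admissible index $j$.

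I then extract the arithmetic carried by the integral closure. By \cite[Theorem 1.4.2]{HH1} there is an integer $t\ge 1$ with $w^{t}\in L^{mt}$; expanding $L^{mt}$ as above and comparing $x_d$-exponents (again $x_d$ is absent from the generators of $I$ and $H$), I obtain an index $j$ with $cj\le at$ and $(w')^{t}\in I^{mt-j}H^{j}$. In particular $(w')^{t}\in (I+H)^{mt}=\bigl((I+H)^{m}\bigr)^{t}$, so the normality of $I+H$ forces $w'\in \overline{(I+H)^{m}}=(I+H)^{m}=\sum_{j=0}^{m}I^{m-j}H^{j}$. Hence $w'$ already lies in $I^{m-j_0}H^{j_0}$ for some $j_0\le m$, and it remains only to guarantee that an admissible index can be chosen small enough to respect the $x_d$-budget $cj\le a$.

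The crux, which I expect to be the main obstacle, is exactly this passage from the fractional $H$-weight $j/t\le a/c$ supplied by the power $w^{t}$ to an honest integer $H$-count $j\le a/c$ with $w'\in I^{m-j}H^{j}$. I plan to argue it through Newton polyhedra: writing $\exp(w)=(a,\mathbf v)$ and using the standard identity $\mathrm{NP}(L^{m})=m\cdot\mathrm{NP}(L)$, membership $w\in\overline{L^m}$ yields a real decomposition of $\mathbf v$ as a nonnegative combination of the exponent vectors of $\mathcal G(I)$ and $\mathcal G(H)$ whose total $H$-weight $\mu$ satisfies $c\mu\le a$. Naively rounding $\mu$ down to $\lfloor\mu\rfloor$ fails, because converting $H$-weight into $I$-weight can push the combination above $\mathbf v$; this is precisely where the normality of $I$, of $H$, and of $I+H$ must all be used, to readjust the decomposition to integer weights while keeping it dominated by $\mathbf v$ and keeping the $H$-count within budget, so that $w'\in\overline{I^{m-j}H^{j}}=I^{m-j}H^{j}$ for an admissible $j$. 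Verifying that the relevant products $I^{m-j}H^{j}$ are integrally closed under the standing hypotheses, and that the integer index can be selected to obey $cj\le a$, delivers $w\in L^{m}$ and thereby the normality of $L$.
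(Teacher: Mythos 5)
Your proposal tracks the paper's opening moves exactly: deriving (ii)--(iv) from (i) via the chain normal $\Rightarrow$ strong persistence $\Rightarrow$ persistence and strong persistence $\Rightarrow$ symbolic strong persistence (\cite{RNA}, \cite{NQBM}, \cite{RT}); the membership criterion $w\in L^m$ iff there is $j$ with $cj\le a$ and $w'\in I^{m-j}H^j$ (valid since $x_d$ divides no generator of $I$ or $H$); the passage $w^t\in L^{mt}$ via \cite[Theorem 1.4.2]{HH1}; and the use of normality of $I+H$ to conclude $w'\in (I+H)^m$. But at the point you yourself identify as the crux --- upgrading the fractional $H$-budget $j/t\le a/c$ carried by $w^t$ to an integer index $z\le a/c$ with $w'\in I^{m-z}H^z$ --- the proposal stops being a proof: ``readjust the decomposition \dots\ this is precisely where the normality \dots\ must all be used'' is a plan, not an argument, and the one concrete ingredient you float, integral closedness of the mixed products $I^{m-j}H^j$, is not available: products of normal monomial ideals need not be integrally closed, and nothing in the hypotheses delivers $\overline{I^{m-j}H^{j}}=I^{m-j}H^{j}$. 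So part (i), which carries all the content, is not established.

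The paper closes this step with no rounding and no Newton--polyhedron readjustment, by an extremal choice plus exponent bookkeeping. Writing $\alpha=x_d^{b}\delta$ with $x_d\nmid\delta$ and $\alpha^{k}\in I^{p}(x_d^{c}H)^{q}$, $p+q=tk$, one takes $p$ minimal (equivalently $q$ maximal) for the membership $\delta^{k}\in I^{p}H^{q}$; comparing $x_d$-exponents gives $bk\ge cq$. Normality of $I+H$ then hands you an honest \emph{integer} decomposition $\delta\in I^{l}H^{z}$ with $l+z=t$; since this forces $\delta^{k}\in I^{lk}H^{zk}$, minimality of $p$ gives $p\le lk$, hence $q\ge zk$, hence $bk\ge cq\ge czk$ and so $b\ge cz$. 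In other words, the very decomposition produced by $(I+H)^{t}$ is automatically within the $x_d$-budget, and $\alpha=x_d^{b}\delta\in I^{l}(x_d^{c}H)^{z}\subseteq L^{t}$; the ``readjustment'' you postpone never has to be performed, because one never trades $H$-weight for $I$-weight at all. Your instinct that this is the delicate point is sound --- even in the paper one must read carefully which decompositions the minimality of $p$ and the inequality $bk\ge cq$ refer to, so that both hold simultaneously --- but that comparison device is exactly what your proposal is missing, and without it (or a worked-out substitute for the polyhedral rounding, including a proof of integral closedness of the relevant products) the argument for (i) has a genuine gap.
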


\begin{proof}
(i) Assume that  $\mathcal{G}(I)= \{u_{1},\ldots ,u_{s}\}$ and
$\mathcal{G}(H)=\{h_1, \ldots, h_r\}$.
In the light of   $\mathrm{gcd}(v,x_{d})=1$ for all $v\in \mathcal{G}(I)\cup  \mathcal{G}(H)$, without loss of generality, one can
assume that $x_{d}=x_{1}\in K[x_{1}]$ and
$$\mathcal{G}(I) \cup \mathcal{G}(H)=\{u_{1},\ldots ,u_{s}, h_1, \ldots, h_r\}\subseteq K[x_{2},\ldots ,x_{n}].$$
To establish the normality of $L$, one has to prove that
 $\overline{L^{t}}=L^{t}$ for all
integers $t\geq 1$.  Obviously, $L^t \subseteq \overline{L^{t}}$, so it  is enough to show
 that $%
\overline{L^{t}}\subseteq L^{t}$. For this  it  suffices to show that an  arbitrary monomial  $\alpha \in  \overline{L^{t}}$ is in $L^t$.
Write $\alpha =x_1^{b}\delta $ with $x_1\nmid \delta $ and $\delta \in R$. In view of
\cite[Theorem 1.4.2]{HH1}, one has   $\alpha ^{k}\in L^{tk}$ for some integer $k\geq 1$. Hence, one can conclude that
$\alpha^{k} \in I^{p}(x^c_{1}H)^{q}$ for some $p$ and $q$ with $p+q=tk$. If $p =0$ (respectively, $q =0$), then
$x_1^{bk}\delta^{k}\in (x^c_1H)^{tk}$ (respectively, $x_1^{bk}\delta^{k}\in I^{tk}),$
and so $x_1^{b}\delta\in \overline{(x^c_1H)^{t}}$
(respectively, $x_1^{b} \delta\in\overline{I^{t}}).$
Because   $I$ and $H$ are normal, this implies that
$x_1^{b}\delta\in {(x^c_1H)^{t}}$
(respectively, $x_1^{b} \delta\in{I^{t}}),$
and the proof is complete. Accordingly, one can  assume that  $p>0$ and $q>0$.
Then,  we   can write
\begin{equation}
\alpha^{k}=x_1^{bk}\delta^{k}=
\prod\limits_{i=1}^{s}u_{i}^{p_{i}}
x_1^{cq+\varepsilon}
\prod\limits_{j=1}^{r}h_{j}^{q_{j}}\beta \text{,} \label{55}
\end{equation}%
with $\sum_{i=1}^{s}p_{i}=p$, $\sum_{j=1}^{r}q_j=q$,
$p+q=tk$, $\varepsilon \geq 0$, and $\beta$ is some
monomial in $R$ such that $x_1\nmid \beta$.
As  $x_1\nmid \beta$, $x_1\nmid \delta$,  and
$\mathrm{gcd}(v,x_1)=1$ for all $v\in \mathcal{G}(I)\cup
 \mathcal{G}(H)$, we obtain
$bk=cq+\varepsilon $, in particular, $bk \geq cq$. In addition,   suppose that $p$  is minimal according to the  membership $\delta^{k}\in I^p H^{q}$. Now,  one can deduce from  (\ref{55}) that
\begin{equation}
\delta^{k}=\prod\limits_{i=1}^{s}u_{i}^{p_{i}}
\prod\limits_{j=1}^{r}h_{j}^{q_{j}}\beta
 \in (I+H)^{tk}.\label{66}
\end{equation}%
  It follows from    (\ref{66}) that   $\delta \in \overline{(I+H)^{t}}.$ Thanks to     $I+H$ is
normal, one has   $\overline{(I+H)^{t}}=(I+H)^{t}$, and so $\delta \in (I+H)^{t}$. Thus, we get $\delta \in I^lH^z$ for some $l$ and $z$ with $l+z=t$, in particular, $\delta^{k} \in I^{lk}H^{zk}$.  One can derive from the minimality of $p$ that $p\leq lk$. On account of $\delta \in I^lH^z$, we can write%
\begin{equation}
\delta =\prod\limits_{i=1}^{s}u_{i}^{l_{i}}
\prod\limits_{j=1}^{r}h_{j}^{z_{j}}
\gamma \text{,} \label{77}
\end{equation}
with $\sum_{i=1}^{s}l_{i}=l$, $\sum_{j=1}^{r}z_{j}=z$, $l+z=t$,  and $\gamma $ is some monomial in
$R$. Observe  that $x_1\nmid \gamma $ due to   $x_1\nmid \delta $.  \par
We have shown above that  $p+q=tk$, $lk+zk=tk$, and $p \leq lk$.
 It follows from this that $q\geq zk$. We have also shown above that  $bk\geq cq$. Thus,  $bk\geq czk$, and since $k\geq 1$, we have $b\geq cz$.
Multiplying (\ref{77}) by $x_1^{b}$ yields
 $x_1^{b}\delta =\prod_{i=1}^{s}u_{i}^{l_{i}}x_1^{b} \prod_{j=1}^{r}h_{j}^{z_{j}}\gamma$. The latter can be rewritten as
 $\prod_{i=1}^{s}u_{i}^{l_{i}} \prod_{j=1}^{r} (x_1^{c}h_{j})^{z_{j}}(x_1^{b-cz}\gamma)\in I^{l}(x_1^{c}H)^{z}\subseteq L^t.$
    Therefore, $\alpha=x_1^{b}\delta\in L^t$, which makes $L$  normal, completing the proof.  \par
   (ii)-(iv) are trivial.
 \end{proof}
%%%%%%%%%%%%%%%%%%%%%%%%%%
%%%%%%%%%%%%%%%%%%%%%%%%%%%
To formulate Theorem \ref{I+hH},  one requires some auxiliary results. We first recall the following definition and result from \cite{HQ}.

\begin{definition}
Let $I\subset R=K[x_1, \ldots, x_n]$ be a monomial ideal with $\mathcal{G}(I)=\{u_1, \ldots, u_m\}$. The {\it  linear relation graph} $\Gamma_I$ of $I$ is the graph with the edge set
\[
E(\Gamma_I)=\{\{x_i,x_j\}: \text{there exist $u_k$,$u_l \in \mathcal{G}(I)$ such that $x_iu_k=x_ju_l$}\},
\]
 and the vertex set $V(\Gamma_I)=\bigcup_{\{x_i,x_j\}\in E(\Gamma)}\{i,j\}$.
 \end{definition}

\begin{theorem}\cite[Theorem 3.3]{HQ}\label{Th. Depth}
Let $I \subset R=K[x_1, \ldots, x_n]$ be a monomial ideal generated in a single degree whose linear relation graph has $r$ vertices and $s$ connected components. Then
\[
\mathrm{depth}(R/I^t) \leq n-t-1 \text{ for } t=1, \ldots, r-s.
\]
\end{theorem}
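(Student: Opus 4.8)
The plan is to translate the depth bound into a statement about projective dimension and then to exhibit a single explicit nonvanishing multigraded Betti number. Since $R$ is a polynomial ring, the Auslander--Buchsbaum formula gives $\mathrm{pd}_R(R/I^t) = n - \mathrm{depth}(R/I^t)$, so the asserted inequality $\mathrm{depth}(R/I^t) \leq n - t - 1$ is equivalent to $\mathrm{pd}_R(R/I^t) \geq t+1$. For this it suffices, for each $t$ with $1 \leq t \leq r-s$, to produce one nonzero Tor class: since $t \geq 1$, the short exact sequence $0 \to I^t \to R \to R/I^t \to 0$ gives $\mathrm{Tor}^R_{t+1}(R/I^t, K) \cong \mathrm{Tor}^R_{t}(I^t, K)$, so the entire problem reduces to finding one nonvanishing syzygy of $I^t$ in homological degree $t$.

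The syzygies are supplied by the edges of $\Gamma_I$. First I would record that an edge $\{x_i, x_j\} \in E(\Gamma_I)$ means there are $u_k, u_l \in \mathcal{G}(I)$ with $x_i u_k = x_j u_l$; writing $m := x_i u_k = x_j u_l$, this yields the linear first syzygy $x_i \mathbf{e}_k - x_j \mathbf{e}_l$ of $I$. Next I fix a spanning forest $F$ of $\Gamma_I$; since $\Gamma_I$ has $r$ vertices and $s$ components, $F$ has exactly $r-s$ edges and is acyclic. For $t \leq r-s$ I choose any $t$ edges $e_1, \ldots, e_t$ of $F$, with associated data $x_{i_p} u_{k_p} = x_{j_p} u_{l_p} =: m_p$, and form the monomial $M := m_1 \cdots m_t \in I^t$. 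The key point is that $M$ admits $2^t$ factorizations as a monomial times a product of $t$ generators of $I$, one for each edge-by-edge choice of the $k_p$- or the $l_p$-factorization; these factorizations are the vertices of a Koszul-type (Taylor) subcomplex sitting in multidegree $M$ inside the resolution of $I^t$, and its top homology produces the candidate class in homological degree $t$.

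The crux of the argument, and the step I expect to be the main obstacle, is proving that this candidate class is genuinely nonzero in $\mathrm{Tor}^R_t(I^t, K)$, i.e.\ that the Koszul cycle built from the $t$ edge-relations is not a boundary. This is exactly where the acyclicity of the spanning forest is indispensable: a cycle among the chosen edges of $\Gamma_I$ would impose a linear dependence among the corresponding relations and collapse the top class to a boundary, whereas a subforest of $t$ edges behaves like $t$ independent relations. To make this rigorous I would pass to the multigraded setting and compute $\mathrm{Tor}^R_t(I^t, K)_M$ via the lcm-lattice description of Gasharov--Peeva--Welker (or an equivalent Hochster-type formula), reducing the nonvanishing to a reduced simplicial homology computation in which the acyclic forest contributes a nonzero class in the appropriate top degree. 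Granting this, $\mathrm{Tor}^R_t(I^t, K) \neq 0$ forces $\mathrm{pd}_R(I^t) \geq t$, hence $\mathrm{pd}_R(R/I^t) \geq t+1$, and Auslander--Buchsbaum yields $\mathrm{depth}(R/I^t) \leq n - t - 1$ for all $1 \leq t \leq r-s$, as required. The delicate bookkeeping of how the monomials $x_{i_p}, x_{j_p}, u_{k_p}, u_{l_p}$ overlap across distinct edges is the one routine-but-fiddly ingredient, and it is controlled throughout by the acyclicity of $F$.
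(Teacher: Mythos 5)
First, a point of reference: the paper does not prove this statement at all --- it is quoted from Herzog--Qureshi \cite[Theorem 3.3]{HQ} --- so your attempt can only be measured against the source's argument. Your skeleton matches that argument: Auslander--Buchsbaum reduces the claim to $\mathrm{pd}_R(R/I^t)\geq t+1$; the shift $\mathrm{Tor}^R_{t+1}(R/I^t,K)\cong\mathrm{Tor}^R_t(I^t,K)$ is valid for $t\geq 1$; each edge $\{x_i,x_j\}\in E(\Gamma_I)$ with $x_iu_k=x_ju_l$ gives the Koszul $1$-cycle $z_e=u_k\,e_i-u_l\,e_j$; a spanning forest has $r-s$ edges; and your candidate class is exactly the wedge $z=z_{e_1}\wedge\cdots\wedge z_{e_t}$, a $t$-cycle in $K_\bullet(x_1,\dots,x_n)\otimes I^t$ whose $2^t$ terms all sit in the single multidegree $M=m_1\cdots m_t$. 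Up to this point you have reconstructed the right object.

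The genuine gap is the nonvanishing, which you explicitly defer to an lcm-lattice (Gasharov--Peeva--Welker) computation that you never carry out, and the route you sketch bypasses the one hypothesis that closes it: that $I$ is generated in a single degree $d$ --- a hypothesis your proposal never invokes, which should itself have been a warning sign. Here is how it enters. A $(t+1)$-chain of $K_\bullet(x)\otimes I^t$ in multidegree $M$ is a combination of elements $e_A\otimes g$ with $|A|=t+1$, $g$ a monomial in $I^t$, and $x_Ag=M$; since $\deg M=t(d+1)$, this forces $\deg g=td-1$, whereas every monomial of $I^t$ has degree at least $td$. So there are \emph{no} $(t+1)$-chains in multidegree $M$ whatsoever, and your central worry --- that the class might be a boundary, with acyclicity of the forest preventing the collapse --- is aimed at the wrong target: boundaries are impossible for degree reasons alone. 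What actually needs proof is that the chain $z$ is nonzero in $K_t\otimes I^t$, i.e.\ that its $2^t$ terms do not cancel among themselves; this is where the forest is really used. Root each tree of the chosen subforest and orient edges away from the roots, relabelling each relation as $x_{i_k}u_k=x_{j_k}v_k$ with $j_k$ the head; then $j_1,\dots,j_t$ are pairwise distinct, the term $v_1\cdots v_t\,e_{j_1}\wedge\cdots\wedge e_{j_t}$ is a genuine basis element, and an induction on a head of maximal depth shows no other subset $S\subseteq\{1,\dots,t\}$ produces the same wedge support, so this term survives and $z\neq 0$. Your proposed alternative --- computing $\mathrm{Tor}^R_t(I^t,K)_M$ via the lcm lattice of $I^t$ --- would require controlling \emph{every} generator of $I^t$ dividing $M$, not just your $2^t$ chosen factorizations; that is precisely the ``routine-but-fiddly bookkeeping'' you defer, and without the single-degree reduction it is not routine at all. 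As written, the proposal identifies the correct cycle but leaves the entire content of the theorem --- why that cycle represents a nonzero homology class --- unproved.
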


%%%%%%%%%%%%%%%%%%%%%%%%%
%%%%%%%%%%%%%%%%%%%%%%%%%
 As an  immediate consequence of Theorem \ref{Th. Depth}, we get the following corollary.

 \begin{corollary}\label{Cor. Depth}
Let $I \subset R=K[x_1, \ldots, x_n]$ be a monomial ideal generated in a single degree whose linear relation graph has $n$ vertices and one  connected component. Then
\begin{itemize}
\item[(i)] $\mathrm{depth}(R/I^{n-1})=0$. In particular, $\mathfrak{m}=(x_1, \ldots, x_n) \in \mathrm{Ass}(R/I^{n-1})$.
\item[(ii)] If $I$ satisfies the persistence property, then
$\mathrm{lim}_{k \rightarrow \infty}\mathrm{depth} R/I^k=0$. In particular, $\mathfrak{m} \in \mathrm{Ass}(R/I^{k})$ for all $k\geq n-1$.
\end{itemize}
\end{corollary}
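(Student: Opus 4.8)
The plan is to derive Corollary \ref{Cor. Depth} directly from Theorem \ref{Th. Depth} by specializing the parameters. Under the hypothesis that the linear relation graph $\Gamma_I$ has $r=n$ vertices and $s=1$ connected component, the range $t=1,\ldots,r-s$ in Theorem \ref{Th. Depth} becomes $t=1,\ldots,n-1$, and the depth bound reads $\mathrm{depth}(R/I^t)\leq n-t-1$.

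For part (i), I would simply substitute $t=n-1$ into this inequality. This gives
\[
\mathrm{depth}(R/I^{n-1})\leq n-(n-1)-1=0.
\]
Since depth is always a nonnegative integer, this forces $\mathrm{depth}(R/I^{n-1})=0$. The ``in particular'' clause then follows from the standard fact that $\mathrm{depth}(R/I^{n-1})=0$ is equivalent to the irrelevant maximal ideal $\mathfrak{m}=(x_1,\ldots,x_n)$ being an associated prime of $R/I^{n-1}$; indeed, depth zero over a $*$-local (or graded local) ring means precisely that $\mathfrak{m}$ consists of zerodivisors on $R/I^{n-1}$, i.e.\ $\mathfrak{m}\in\mathrm{Ass}(R/I^{n-1})$.

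For part (ii), I would combine part (i) with the persistence property. By (i) we know $\mathfrak{m}\in\mathrm{Ass}(R/I^{n-1})$. The persistence property asserts $\mathrm{Ass}_R(R/I^k)\subseteq\mathrm{Ass}_R(R/I^{k+1})$ for all $k\geq1$, so an easy induction starting from $k=n-1$ shows $\mathfrak{m}\in\mathrm{Ass}(R/I^k)$ for every $k\geq n-1$. Finally, $\mathfrak{m}\in\mathrm{Ass}(R/I^k)$ is equivalent to $\mathrm{depth}(R/I^k)=0$, which gives $\mathrm{depth}(R/I^k)=0$ for all $k\geq n-1$ and hence $\lim_{k\to\infty}\mathrm{depth}(R/I^k)=0$.

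I do not anticipate any real obstacle here, since this is a routine corollary: the entire content is the specialization $r=n$, $s=1$ together with the dictionary between depth zero and $\mathfrak{m}$ being associated. The only point requiring a modicum of care is invoking the graded/local version of the depth--associated-prime equivalence (one works with the $\mathfrak{m}$-adic or graded setting so that $\mathrm{depth}(R/I^k)=0$ is genuinely equivalent to $\mathfrak{m}\in\mathrm{Ass}(R/I^k)$), but this is standard for monomial ideals in a polynomial ring.
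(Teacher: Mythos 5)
Your proof is correct and is exactly the intended argument: the paper states this corollary without proof as an ``immediate consequence'' of Theorem \ref{Th. Depth}, and your specialization $r=n$, $s=1$, $t=n-1$, together with the standard graded-local equivalence between $\mathrm{depth}(R/I^k)=0$ and $\mathfrak{m}\in\mathrm{Ass}(R/I^k)$ and the persistence-property induction for part (ii), is precisely the routine derivation the authors had in mind.
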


%%%%%%%%%%%%%%%%%%%%%%%%%
%%%%%%%%%%%%%%%%%%%%%%%%%

 The following theorem, which is a generalization of   Theorem \ref{I+xcH},  is crucial for us to prove Theorem \ref{NI-Bipartite} and Lemma \ref{Normality-H}.

 \begin{theorem} \label{I+hH}
Let $I$ and $H$  be two normal   monomial ideals in a  polynomial ring $R=K[x_1, \ldots, x_n]$
such that $I+H$ is normal.   Let  $h$ be  a  monomial in $R$  with
 $\gcd (v,h)=1$ for all $v\in \mathcal{G}(I)\cup  \mathcal{G}(H)$.  Then the following statements hold:
\begin{itemize}
\item[(i)]   $L:=I+hH$  is normal.
\item[(ii)]  $L:=I+hH$  has the strong persistence property.
\item[(iii)]  $L:=I+hH$  has the persistence property.
\item[(iv)]  $L:=I+hH$  has the symbolic persistence property.
\item[(v)]  If  $L:=I+hH$ is generated in a single degree whose linear relation graph has $n$ vertices and one  connected component, then  $\mathrm{lim}_{k \rightarrow \infty}\mathrm{depth} R/L^k=0$. In particular, $\mathfrak{m}=(x_1, \ldots, x_n) \in \mathrm{Ass}(R/L^{k})$ for all $k\geq n-1$.
\end{itemize}
\end{theorem}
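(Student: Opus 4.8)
The plan is to reduce everything to the single‑variable case already settled in Theorem \ref{I+xcH}, by peeling off the variables occurring in $h$ one at a time. The first point to notice is that parts (ii)--(v) are essentially free once (i) is proved: by the chain of implications recorded in Section \ref{Preliminaries} (normal $\Rightarrow$ strong persistence $\Rightarrow$ persistence, and strong persistence $\Rightarrow$ symbolic strong persistence), normality of $L$ yields (ii)--(iv) at once; and (v) follows by feeding (iii) into Corollary \ref{Cor. Depth}(ii), whose hypotheses (single degree, linear relation graph with $n$ vertices and one connected component) are exactly the ones imposed in (v). Thus the entire weight of the theorem rests on establishing (i).

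For (i) I would first isolate the elementary fact that multiplying a normal monomial ideal by a monomial preserves normality. For any monomial $u$ and any monomial ideal $J$ one has $\overline{uJ}=u\,\overline{J}$: if $m$ is a monomial with $m^N\in (uJ)^N=u^NJ^N$ for some $N$, then $m^N$ lies in $(u^N)$, which forces $u\mid m$, and the membership is equivalent to $(m/u)^N\in J^N$, i.e. $m/u\in\overline{J}$. Applying this with $J=H^k$ gives $\overline{(uH)^k}=u^k\overline{H^k}=u^kH^k=(uH)^k$ whenever $H$ is normal, so $uH$ is normal for every monomial $u$.

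With this lemma in hand I would prove (i) by induction on $m:=|\mathrm{supp}(h)|$, the number of distinct variables dividing $h$, keeping $I$ and $H$ fixed. If $m=0$ then $h$ is a unit and $L=I+H$ is normal by hypothesis; if $m=1$ then $h=x_d^{c}$ and the assertion is precisely Theorem \ref{I+xcH}(i). For the inductive step, write $h=x_d^{c}h'$ with $x_d\nmid h'$, so $|\mathrm{supp}(h')|=m-1$. Since $h'\mid h$, the coprimality $\gcd(v,h')=1$ holds for all $v\in\mathcal{G}(I)\cup\mathcal{G}(H)$, and hence by the induction hypothesis $I+h'H$ is normal. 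Now put $H':=h'H$, whose minimal generators are the monomials $h'h_j$ with $h_j\in\mathcal{G}(H)$. Then $H'$ is normal by the lemma, and because $x_d\nmid h'$ together with $x_d\nmid v$ for every $v\in\mathcal{G}(H)$, one has $\gcd(h'h_j,x_d)=1$, while also $\gcd(u_i,x_d)=1$ for the generators of $I$. Thus the triple $(I,H')$ with the variable $x_d$ and exponent $c$ satisfies all the hypotheses of Theorem \ref{I+xcH}(i), which gives that $I+x_d^{c}H'=I+x_d^{c}h'H=I+hH=L$ is normal, completing the induction.

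There is no serious obstacle here, since Theorem \ref{I+xcH} already carries out the substantive integral‑closure computation; the only steps requiring care are verifying at each stage that its hypotheses genuinely persist — namely that $h'H$ is again normal (handled by the multiplication lemma) and that the split‑off variable $x_d$ stays coprime to every generator of the enlarged ideal $h'H$ (which holds precisely because $x_d$ does not divide $h'$). One could alternatively bypass the induction and repeat the proof of Theorem \ref{I+xcH} verbatim, replacing the single exponent bookkeeping on $x_1$ by a simultaneous degree count $b_ik=c_iq+\varepsilon_i$ in each variable $x_i$ of $\mathrm{supp}(h)$; these equalities hold separately for each such variable because none of them divides any generator of $I$ or $H$, and the remainder of the argument goes through unchanged. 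The inductive route is cleaner, however, as it reuses Theorem \ref{I+xcH} as a black box.
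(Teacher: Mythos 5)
Your proposal is correct and follows essentially the same route as the paper: induction on the number of variables dividing $h$, with Theorem \ref{I+xcH} supplying the base case and powering each inductive step, and with (ii)--(v) dispatched exactly as the paper does (the implication chain for (ii)--(iv), and Corollary \ref{Cor. Depth} plus (iii) for (v)). The only difference is cosmetic: in the inductive step you apply Theorem \ref{I+xcH} to the pair $(I,\,h'H)$, which obliges you to prove the (correct) auxiliary fact $\overline{uJ}=u\,\overline{J}$ to get normality of $h'H$, whereas the paper applies it to $(I,\,I+x_1^{c_1}\cdots x_{m-1}^{c_{m-1}}H)$, whose normality is the induction hypothesis itself and for which $I+H'=H'$, so no extra lemma is needed.
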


\begin{proof}
(i)  Let $h=x_1^{c_1} \cdots x_m^{c_m}$  and  use induction on $m$. \par
(ii)-(iv) are trivial.  \par
 (v) Corollary \ref{Cor. Depth}  together with (iii) yield the assertion.
\end{proof}

 %%%%%%%%%%%%%%%%%%%%%%%%%
%%%%%%%%%%%%%%%%%%%%%%%%%
To show Theorem \ref{DI-Wheel}, we need to use the next proposition.

\begin{proposition} \label{I+JH}
Let $I\subseteq H$ be two normal   monomial ideals in a polynomial ring  $R=K[x_1, \ldots, x_n]$. Let  $J$ be a   monomial ideal of $R$ such that $\mathrm{gcd}(u,v)=1$  for all $v, u \in \mathcal{G}(J)$ with $u\neq v$.
Also, let $\mathrm{gcd}(u,v)=1$ for all $u\in \mathcal{G%
}(I)\cup \mathcal{G}(H)$ and $v\in \mathcal{G}(J)$.
 Then the following statements hold:
\begin{itemize}
\item[(i)]   $L:=I+JH$  is normal.
\item[(ii)]  $L:=I+JH$  has the strong persistence property.
\item[(iii)]  $L:=I+JH$  has the persistence property.
\end{itemize}
\end{proposition}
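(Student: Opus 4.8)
The plan is to decompose $JH$ as the sum $\sum_i w_i H$ over the minimal generators $w_1, \ldots, w_t$ of $J$ and to build up $L$ one summand at a time, applying Theorem \ref{I+hH} repeatedly. Concretely, I would write $\mathcal{G}(J) = \{w_1, \ldots, w_t\}$ and set $L_0 := I$ and $L_k := I + w_1 H + \cdots + w_k H$ for $1 \le k \le t$, so that $L_t = I + JH = L$. The goal is to prove, by induction on $k$, that each $L_k$ is normal.

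For the inductive step I would invoke Theorem \ref{I+hH} with $L_{k-1}$ playing the role of $I$, with $H$ unchanged, and with $h := w_k$. Three hypotheses must be verified. First, $L_{k-1}$ is normal by the induction hypothesis (the base case being that $L_0 = I$ is normal), and $H$ is normal by assumption. Second, one needs $L_{k-1} + H$ to be normal; here the assumption $I \subseteq H$ is decisive, since $I \subseteq H$ and $w_i H \subseteq H$ for every $i$ together give $L_{k-1} + H = H$, which is normal. Third, one needs $\gcd(w_k, v) = 1$ for every $v \in \mathcal{G}(L_{k-1}) \cup \mathcal{G}(H)$.

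This last coprimality condition is the only delicate point, and it is where I expect the bookkeeping to concentrate. Using the standard fact that $\mathcal{G}(A + B) \subseteq \mathcal{G}(A) \cup \mathcal{G}(B)$ for monomial ideals, together with $\mathcal{G}(w_i H) = \{w_i h : h \in \mathcal{G}(H)\}$, I would observe that every $v \in \mathcal{G}(L_{k-1})$ either lies in $\mathcal{G}(I)$ or has the form $w_i h$ with $i < k$ and $h \in \mathcal{G}(H)$. In the first case, $\gcd(w_k, v) = 1$ by the hypothesis that the generators of $J$ are coprime to those of $I$. In the second case, $\gcd(w_k, w_i) = 1$ by the pairwise coprimality of $\mathcal{G}(J)$ (since $i \neq k$) and $\gcd(w_k, h) = 1$ by coprimality with $\mathcal{G}(H)$, whence $\gcd(w_k, w_i h) = 1$. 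For $v \in \mathcal{G}(H)$ the coprimality is immediate from the hypotheses. Consequently Theorem \ref{I+hH}(i) applies and shows that $L_k$ is normal, which completes the induction and establishes (i).

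Finally, statements (ii) and (iii) follow immediately from (i) by the chain of implications recorded in the Preliminaries: every normal monomial ideal has the strong persistence property, and the strong persistence property in turn implies the persistence property.
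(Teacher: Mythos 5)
Your proof is correct and takes essentially the same approach as the paper's: the paper likewise inducts on the number of minimal generators of $J$, writing $L = F + u_sH$ with $F := I + (u_1, \ldots, u_{s-1})H$, noting $F + H = I + H = H$ is normal thanks to $I \subseteq H$, and applying Theorem \ref{I+hH}(i). The only difference is that you spell out the coprimality check $\gcd(w_k, v) = 1$ for $v \in \mathcal{G}(L_{k-1}) \cup \mathcal{G}(H)$, which the paper merely asserts; your verification of it is valid.
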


\begin{proof}
(i) Let  $\mathcal{G}(J)=\{u_{1},\ldots ,u_{s}\}$.
Proceed by induction on $s$ and use  Theorem \ref{I+hH}. \par 
(ii) and (iii) are trivial.
\end{proof}

%%%%%%%%%%%%%%%%%%%%%%%%%%%%
%%%%%%%%%%%%%%%%%%%%%%%%%%%%

It has already been remained the following open question:

\begin{question} \cite[Question 2.11]{ANKRQ}
Let $I$ be a normal square-free monomial ideal in $%
R=K[x_{1},\ldots ,x_{n}]$ with  $\mathcal{G}(I) \subset R$. Then, in general, can one conclude    $L:=IS\cap (x_{n},x^{\ell}_{n+1})\subset S=R[x_{n+1}]$ with $\ell >1$, is normal?
\end{question}

As an application of Theorem \ref{I+xcH},  we provide an affirmative answer to above  open question  in the following proposition:

%%%%%%%%%%%%%%%%%%%%%%%%%%%
%%%%%%%%%%%%%%%%%%%%%%%%%%%

 \begin{proposition} \label{Pro.vI+wJ}
Let $I \subset R=K[x_{1},\ldots ,x_{n}]$ be a normal square-free monomial ideal   with $\mathcal{G}(I) \subset R$. Then
 $L:=IS\cap (x_{n},x^{\ell}_{n+1})\subset S=R[x_{n+1}]$ with $\ell \geq 1$, is normal.
\end{proposition}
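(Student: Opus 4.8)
The plan is to put $L$ into the exact shape handled by Theorem~\ref{I+xcH}, namely a sum of the form $I'+x_{n+1}^{c}H$ in which the base variable $x_{n+1}$ is disjoint from the supports of the two summands. Since $IS$ is generated by the square-free generators $u_1,\ldots,u_s$ of $I$, viewed in $S$, and $(x_n,x_{n+1}^{\ell})$ is generated by $x_n$ and $x_{n+1}^{\ell}$, computing the intersection of two monomial ideals through least common multiples yields the generators $\mathrm{lcm}(u_i,x_n)$ on the one hand and $\mathrm{lcm}(u_i,x_{n+1}^{\ell})=u_i x_{n+1}^{\ell}$ on the other. Hence I would first record the decomposition
\[
L \;=\; IS\cap(x_n,x_{n+1}^{\ell}) \;=\; \big(I\cap(x_n)\big)S \;+\; x_{n+1}^{\ell}\,(IS),
\]
where $I\cap(x_n)$ is computed in $R$ and then extended to $S$. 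Setting $I':=(I\cap(x_n))S$, $H:=IS$, $x_d:=x_{n+1}$, and $c:=\ell\ (\geq 1)$, the variable $x_{n+1}$ is coprime to every generator of $I'$ and of $H$, so the hypotheses of Theorem~\ref{I+xcH} will be in force once I check that $I'$, $H$, and $I'+H$ are all normal.

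Two of these three normalities are immediate. First, $IS$ is normal: $I$ normal means the Rees algebra $R[It]$ is a normal domain, and adjoining the free variable $x_{n+1}$ preserves normality, so $S[(IS)t]=R[It][x_{n+1}]$ is normal, i.e. $IS$ is normal. Second, $I'+H=(I\cap(x_n))S+IS=IS$ because $I\cap(x_n)\subseteq I$, so $I'+H$ is normal as well. The only substantive point, and the one I expect to be the main obstacle, is the normality of $I'=(I\cap(x_n))S$, equivalently of $M:=I\cap(x_n)$ in $R$, normality again passing through the polynomial extension.

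To prove $M$ is normal I would show $\overline{M^{k}}\subseteq M^{k}$ for every $k\geq 1$, the reverse inclusion being automatic. Since $M\subseteq I$ and $M\subseteq(x_n)$, any monomial $w\in\overline{M^{k}}$ satisfies $w^{t}\in M^{kt}\subseteq I^{kt}\cap(x_n^{kt})$ for some $t$; normality of $I$ then forces $w\in\overline{I^{k}}=I^{k}$, while $x_n^{kt}\mid w^{t}$ forces $x_n^{k}\mid w$. It thus remains to establish the identity $I^{k}\cap(x_n^{k})=M^{k}$, where only the inclusion $\subseteq$ needs work, and here square-freeness of $I$ enters decisively. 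Writing a monomial $w\in I^{k}\cap(x_n^{k})$ as $w=u_{i_1}\cdots u_{i_k}\,r$ with each $u_{i_j}\in\mathcal{G}(I)$ and $r\in R$ a monomial, each generator has $x_n$-degree at most $1$, so if exactly $p$ of them are divisible by $x_n$ then $\deg_{x_n}(r)\geq k-p$. Redistributing $k-p$ factors of $x_n$ from $r$ onto the $x_n$-free generators expresses $w$ as a product of $k$ monomials, each lying in $I\cap(x_n)=M$, times the leftover monomial $r/x_n^{k-p}\in R$; hence $w\in M^{k}$. This proves $M$ normal, and then Theorem~\ref{I+xcH}(i), applied to $I'+x_{n+1}^{\ell}H$, gives that $L$ is normal, completing the argument.
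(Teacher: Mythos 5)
Your proof is correct, and its skeleton coincides with the paper's: both decompose $L$ as $I\cap(x_n)+x_{n+1}^{\ell}I$ (the paper phrases the first summand as $x_n(I:_S x_n)$, which is the same ideal) and then invoke Theorem~\ref{I+xcH} with base variable $x_{n+1}$ and $c=\ell$, using $I\cap(x_n)+I=I$ to dispose of the third normality hypothesis. The one genuine divergence is how normality of $I\cap(x_n)$ is obtained. The paper gets it in one line: since $I\cap(x_n)=x_n(I:_S x_n)$, it cites \cite[Proposition 12.2.3]{V1} for normality of the colon ideal $(I:_S x_n)$ and uses that multiplying a normal ideal by a variable preserves normality. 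You instead prove it from scratch: setting $M:=I\cap(x_n)$, you establish $M^{k}=I^{k}\cap(x_n^{k})$ by redistributing factors of $x_n$ from the residual monomial onto the $x_n$-free generators, and then deduce $\overline{M^{k}}\subseteq M^{k}$ from the normality of $I$ together with an $x_n$-degree count; the redistribution argument is valid, since square-freeness gives $\deg_{x_n}u\leq 1$ for every $u\in\mathcal{G}(I)$. What each buys: your route is self-contained and elementary, at the price of using square-freeness in that step (harmless here, as it is hypothesized), whereas the paper's citation is shorter and rests on a colon fact valid for arbitrary normal monomial ideals. Your explicit check that $IS$ remains normal under the polynomial extension (via $S[(IS)t]=R[It][x_{n+1}]$) makes precise a point the paper passes over silently.
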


\begin{proof}
  It is routine to check that $$L=I\cap (x_n) + I\cap (x^{\ell}_{n+1}) =x_n(I:_Sx_n) + x^{\ell}_{n+1}I.$$
Since $I$ is normal, \cite[Proposition 12.2.3]{V1}  implies that $(I:_Sx_n)$ is normal, and hence $x_n(I:_Sx_n)$ is normal as well.  In addition,
$x_n(I:_Sx_n) + I=I$ is normal. Now, one can rapidly deduce from Theorem \ref{I+xcH} that $L$ is normal, as claimed.
\end{proof}

%%%%%%%%%%%%%%%%%%%%%%%%%%%
%%%%%%%%%%%%%%%%%%%%%%%%%%%

 To see an application of Proposition  \ref{Pro.vI+wJ}, we give the subsequent corollary.

\begin{corollary} \label{Cor.vI+wJ}
Let $I$ be a normal square-free monomial ideal in $%
R=K[x_{1},\ldots ,x_{n}]$ with $\mathcal{G}(I) \subset R$. Then the square-free monomial ideal $$L:=IS\cap (x_{n},x_{n+1},\ldots, x_{n+m})\subset S=R[x_{n+1}, \ldots, x_{n+m}],$$ is normal.
\end{corollary}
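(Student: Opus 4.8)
The plan is to argue by induction on $m$, taking Proposition \ref{Pro.vI+wJ} (with $\ell=1$) as the base case $m=1$, where the statement reads exactly $IS\cap(x_n,x_{n+1})$ is normal in $S=R[x_{n+1}]$. For the inductive step, write $S=S'[x_{n+m}]$ with $S'=R[x_{n+1},\ldots,x_{n+m-1}]$, and set $L':=IS'\cap(x_n,x_{n+1},\ldots,x_{n+m-1})$, which is normal in $S'$ by the inductive hypothesis. The first task is to produce an additive decomposition of $L$ that isolates the freshly adjoined variable $x_{n+m}$, so that the criterion of Theorem \ref{I+xcH} can be brought to bear in the ring $S$.

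The key identity I would establish is
\[
L = L'S + x_{n+m}(IS).
\]
To obtain it, decompose the monomial ideal $(x_n,\ldots,x_{n+m}) = (x_n,\ldots,x_{n+m-1}) + (x_{n+m})$ and use the elementary fact that, for monomial ideals, intersection distributes over a sum; this yields $L = \bigl(IS\cap(x_n,\ldots,x_{n+m-1})\bigr) + \bigl(IS\cap(x_{n+m})\bigr)$. Since $x_{n+m}$ is a new variable dividing no generator of $I$, one has $(IS:_S x_{n+m})=IS$ and hence $IS\cap(x_{n+m}) = x_{n+m}(IS)$; and because intersection of monomial ideals commutes with the polynomial extension $S'\subset S$, the other summand is $IS\cap(x_n,\ldots,x_{n+m-1}) = L'S$.

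With the decomposition in hand, I would apply Theorem \ref{I+xcH} with the variable $x_d=x_{n+m}$, exponent $c=1$, and the two ideals $L'S$ and $IS$. Their normality follows from the inductive hypothesis and the normality of $I$, using that normality ascends along the polynomial extension $S'\subset S$ (integral closure commutes with polynomial extension). The containment $L'\subseteq IS'$, immediate from $L'$ being an intersection with $IS'$, gives $L'S\subseteq IS$, so the sum $L'S + IS$ equals $IS$ and is therefore normal; this verifies the hypothesis ``$I+H$ normal'' of Theorem \ref{I+xcH}. Finally, neither $L'S$ nor $IS$ has a generator divisible by $x_{n+m}$, so the required coprimality holds. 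Theorem \ref{I+xcH}(i) then gives that $L=L'S + x_{n+m}(IS)$ is normal, closing the induction; and $L$ is square-free, being an intersection of square-free monomial ideals.

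The main obstacle I anticipate is not the invocation of Theorem \ref{I+xcH} but the careful bookkeeping behind the decomposition identity: one must confirm that intersection with a monomial ideal commutes with the extension $S'\subset S$, that $(IS:_S x_{n+m})=IS$, and that the two summands meet precisely the hypotheses of Theorem \ref{I+xcH}. A cleaner non-inductive alternative would be to expand $L = x_n(I:_S x_n)S + (x_{n+1},\ldots,x_{n+m})(IS)$ in one step and apply Proposition \ref{I+JH} with $J=(x_{n+1},\ldots,x_{n+m})$, whose generators are pairwise coprime and coprime to those of $I$; the inductive route, however, keeps the dependence on Proposition \ref{Pro.vI+wJ} explicit.
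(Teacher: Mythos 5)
Your proof is correct and takes essentially the same route as the paper's: induction on $m$ with Proposition \ref{Pro.vI+wJ} as the base case, the decomposition $L=\bigl(IS\cap(x_{n},\ldots,x_{n+m-1})\bigr)+x_{n+m}(IS)$, and an appeal to Theorem \ref{I+xcH} (the paper invokes Theorem \ref{I+hH}, which for the single variable $h=x_{n+m}$ with exponent $1$ is the same statement). The only difference is that you spell out the bookkeeping the paper leaves implicit --- distributing the intersection over the sum, the containment $L'S\subseteq IS$ giving that $L'S+IS=IS$ is normal, and the coprimality with $x_{n+m}$.
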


\begin{proof}
We demonstrate the claim by using induction on $m$. In view of Proposition \ref{Pro.vI+wJ}, we deduce  that the claim is true  for the case in which $m=1$. Suppose that the claim holds for $m-1$.
Put $J:=I\cap (x_{n},x_{n+1},\ldots, x_{n+m-1})$. We thus have $L=J+x_{n+m}I$. The inductive hypothesis implies that
$J$  is normal. In the light of Theorem \ref{I+hH}, one can conclude that $L$ is normal. This completes the inductive step, and so  the claim has been shown by induction.
\end{proof}

%%%%%%%%%%%%%%%%%%%%%%%%%%%
%%%%%%%%%%%%%%%%%%%%%%%%%%%

The next  proposition will be used to establish Lemma \ref{Normality-H}.

\begin{proposition} \label{xn&x(n+1)}
Let $I \subset  R=K[x_{1},\ldots ,x_{n}, x_{n+1}]$ be a normal square-free monomial ideal
such that $I \cap (x_n)  + (I:_Rx_{n+1})$ is normal. Then   $L:=I \cap (x_{n},x_{n+1}) $ is normal.
\end{proposition}

\begin{proof}
It is not hard to check that   $I\cap (x_{n})=x_{n}\left( I:_{R}x_{n}\right)$ and $I\cap (x_{n+1})=x_{n+1}\left( I:_{R}x_{n+1}\right)$. Hence,
in view of \cite[Exercise  6.1.23]{V1},  we get
$L=x_{n}\left( I:_{R}x_{n}\right)+x_{n+1}\left( I:_{R}x_{n+1}\right)$. To simplify notation,
put  $F:=\left( I:_{R}x_{n}\right) $ and $G:=\left( I:_{R}x_{n+1}\right)$.
Thus, $L=x_{n}F+x_{n+1}G$.
Let  $\mathcal{G}(I)=\{u_1, \ldots, u_s\}$, and define $f_i=u_i$
 if  $x_n \nmid u_i$, and $f_i=u_i/x_n$ if $x_n \mid u_i$.  By the square-free assumption
on $I$, none of the $f_i$ is divisible by $x_n$.  Similarly, define  a set of
elements $g_i$ by considering divisibility by $x_{n+1}$, i.e., $g_i = u_i$
 if $x_{n+1} \nmid  u_i$ and $g_i = u_i/x_{n+1}$ if $x_{n+1} \mid u_i$. Again, by the square-free
assumption, none of the $g_i$ is divisible by $x_{n+1}$.
One can readily see that $F=(f_1, \ldots, f_s)$ and $G=(g_1, \ldots, g_s)$.
By   \cite[Proposition 12.2.3]{V1},  $F$ and $G$ are  normal, and so  $x_{n}F$ and
$x_{n+1}G$ are also  normal.
By  hypothesis,  $x_nF+G$ is normal. Accordingly, we obtain  $\overline{\left( x_{n}F+G\right) ^{t}}=\left(
x_{n}F+G\right) ^{t}$ for all $t\geq 1$. One has to prove  that  $\overline{%
L^{t}}=L^{t}$ for all $t\geq 1$.
To accomplish this, it is enough to show that $\overline{L^t} \subseteq L^t$ for all $t\geq 1$.  To do this, pick an arbitrary  monomial $\alpha$  in $\overline{L^{t}}$ and write
$\alpha =x_{n+1}^{b}\delta $ for some integer $b$ and some monomial $\delta
\in R$ with $x_{n+1}\nmid \delta $. Since  $\alpha \in \overline{L^{t}}$,  \cite[Theorem 1.4.2]{HH1} gives  that
 $%
\alpha ^{k}\in L^{tk}=\left( x_{n}F+x_{n+1}G\right) ^{tk}$ for some positive integer $%
k$. This implies that  $\alpha ^{k}\in \left( x_{n}F\right) ^{p}\left(x_{n+1}G\right) ^{q}$ for some integers $p$ and $q$ with $p+q=tk$. Assume that $q$ is maximal according to this membership. This means that $p$ is minimal  according to this membership. Observe  that if $p=0$,  then $\alpha^{k}\in \left( x_{n+1}G\right) ^{tk}$, and hence $\alpha \in \overline{\left(x_{n+1}G\right) ^{t}}=\left( x_{n+1}G\right) ^{t}\subset L^{t}$. Henceforth,
let  $p>0$. Thanks to  $x_{n}F$ is normal, a similar argument shows that  one  may  assume $q>0$ too. Write%
\begin{equation}
\alpha ^{k}=x_{n+1}^{bk}\delta ^{k}=\prod\limits_{i=1}^{s}f_{i}^{p_{i}}%
\text{ }x_{n}^{p}\text{ }\prod\limits_{j=1}^{s}g_{j}^{q_{j}}\text{ }%
x_{n+1}^{q}\text{ }\beta \text{,}  \label{88}
\end{equation}%
with $\sum\nolimits_{i=1}^{s}p_{i}=p$, $\sum\nolimits_{j=1}^{s}q_{j}=q$,
and $\beta $   some monomial in $S$. Let  $x_{n+1}\mid \beta $. On account of either $f_{j}x_{n}=u_{j}$ or $%
f_{j}=u_{j}$, this contradicts  the maximality of $q$.
In addition, if there exists some $f_j$ with $p_j>0$ such that $x_{n+1}\mid f_j$, then once again this leads to a contradiction to the maximality of $q$.
Therefore, we can assume in (\ref{88})\ that $x_{n+1}\nmid
\beta $, and also $x_{n+1}\nmid f_j$ with $p_j>0$. Consequently,  one  can derive  that $q=bk$, in particular, $\delta^{k} \in (x_nF)^p G^q$. Now,  we get the following equality
$$
\delta ^{k}=\prod\limits_{i=1}^{s}f_{i}^{p_{i}}\text{ }x_{n}^{p}\text{ }%
\prod\limits_{j=1}^{s}g_{j}^{q_{j}}\text{ }\beta \in \left( x_{n}F+G\right)
^{tk}\text{.}$$
Thus,   $\delta \in $ $\overline{\left( x_{n}F+G\right) ^{t}}$, and so  $%
\delta \in $ $\left( x_{n}F+G\right) ^{t}$.
Hence, one obtains $\delta \in (x_nF)^lG^h$ for some $l$ and $h$ with $l+h=t$, in particular, $\delta^{k} \in (x_nF)^{lk}G^{hk}$.
 It follows from the minimality of $p$ that $lk \geq p$. Because  $lk \geq p$ and $p+q =lk+hk=tk$, we gain
 $q \geq hk$.  Since $k\geq 1$, one can deduce from $q=bk$ and $q\geq hk$ that $b \geq h$.
It follows now from  $\delta \in (x_nF)^lG^h$  and  $b\geq h$  that    $\alpha=x_{n+1}^{b}\delta
 \in  \left(x_nF+x_{n+1}G\right)^{t}=L^t,$ and the argument is over.
\end{proof}

%%%%%%%%%%%%%%%%%%%%%%%%%%%
%%%%%%%%%%%%%%%%%%%%%%%%%%%

\section{On the closed neighborhood ideals and dominating ideals of complete bipartite graphs} \label{c-b graphs}

In this section, we provide some applications of the previous section. In fact,  we study the closed neighborhood ideals and dominating ideals of complete bipartite graphs.
We begin with the following theorem which says that
closed neighborhood ideals of complete bipartite graphs are normal, and so satisfy the (strong) persistence property.

 \begin{theorem} \label{NI-Bipartite}
 Let $K_{r,s}$ be a complete bipartite graph.
 Then the following statements hold:
\begin{itemize}
\item[(i)]   $NI(K_{r,s})$  is normal.
\item[(ii)]  $NI(K_{r,s})$  has the strong persistence property.
\item[(iii)]  $NI(K_{r,s})$  has the persistence property.
\end{itemize}
 \end{theorem}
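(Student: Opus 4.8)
The plan is to exhibit $NI(K_{r,s})$ in the form $I+hH$ and then invoke Theorem \ref{I+hH}, so that all three assertions drop out simultaneously from parts (i)--(iii) of that theorem. Relabel the variables so that the two parts of the bipartition of $K_{r,s}$ correspond to $x_1,\ldots,x_r$ and $y_1,\ldots,y_s$, and abbreviate $X=x_1\cdots x_r$, $Y=y_1\cdots y_s$, and $X'=x_1\cdots x_{r-1}$. Since the closed neighborhood of each $x_i$ is $\{x_i\}\cup\{y_1,\ldots,y_s\}$ and that of each $y_j$ is $\{y_j\}\cup\{x_1,\ldots,x_r\}$, the defining generators are read off directly:
\[
NI(K_{r,s})=(x_1Y,\ldots,x_rY,\;y_1X,\ldots,y_sX).
\]

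The obvious decomposition $NI(K_{r,s})=Y(x_1,\ldots,x_r)+X(y_1,\ldots,y_s)$ is insufficient for Theorem \ref{I+hH}: the monomial $X$ that one would want to factor out of the second summand shares each variable $x_i$ with the generator $x_iY$ of the first summand, so the required coprimality fails. The device that fixes this—and the step I expect to be the crux of setting up the argument—is to split off only the single variable $x_r$. Put
\[
I=(x_1Y,\ldots,x_{r-1}Y)=Y\,(x_1,\ldots,x_{r-1}),\qquad H=(Y)+X'(y_1,\ldots,y_s).
\]
Because $x_rX'=X$, a short check gives the identity $NI(K_{r,s})=I+x_rH$. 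Now $x_r$ divides no generator of $I$ or of $H$, so $\gcd(v,x_r)=1$ for every $v\in\mathcal{G}(I)\cup\mathcal{G}(H)$; moreover each generator of $I$ is a multiple of $Y$, whence $I\subseteq(Y)\subseteq H$ and therefore $I+H=H$. Thus, once $I$ and $H$ are known to be normal, Theorem \ref{I+hH} applied with $h=x_r$ yields immediately that $NI(K_{r,s})=I+x_rH$ is normal and has the strong persistence and persistence properties, which are parts (i), (ii), (iii).

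Two normality inputs then remain. For $I=Y\,(x_1,\ldots,x_{r-1})$ I would use two elementary facts: an ideal generated by a subset of the variables is normal (its $t$-th power is integrally closed because membership there is governed only by the degree in those variables), and multiplying a normal monomial ideal by a fixed monomial preserves normality (for monomials $w\mid u$ iff $w^m\mid u^m$, so $\overline{(wJ)^t}=w^t\,\overline{J^t}=w^tJ^t$). The genuine content is the normality of $H=(Y)+X'(y_1,\ldots,y_s)$, which I expect is precisely the auxiliary statement recorded as Lemma \ref{Normality-H}; it too follows from Theorem \ref{I+hH}, now with $I_1=(Y)$ (principal, hence normal), $H'=(y_1,\ldots,y_s)$ (normal), and $h'=X'$, since $X'$ is coprime to $Y$ and to every $y_j$, while $I_1+H'=(y_1,\ldots,y_s)$ is normal. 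Finally I would treat the degenerate boundary cases separately: when $r=1$ one has $I=0$ and $NI(K_{1,s})=x_1(y_1,\ldots,y_s)$, and when $s=1$ the ideal collapses to $y_1(x_1,\ldots,x_r)$, both plainly normal as a monomial times a variable ideal. The one point meriting real care throughout is verifying the identity $NI(K_{r,s})=I+x_rH$ together with the inclusion $I\subseteq H$, since the whole reduction hinges on feeding correctly matched data $I,H,h$ into Theorem \ref{I+hH}.
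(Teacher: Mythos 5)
Your proof is correct and takes essentially the same route as the paper: the paper likewise writes $NI(K_{r,s})=I+x_dH$ with $I$ equal to a variable ideal times the product of one side's variables and $H$ of the form (that product) $+$ (partial product of the other side)(variable ideal) --- distinguishing $x_1$ where you distinguish $x_r$ --- treats $r=1$ and $s=1$ separately, verifies $I+H=H$ and the coprimality condition, and concludes via Theorem \ref{I+hH}. The only cosmetic differences are that the paper certifies the normality of $H$ by citing \cite[Theorem 1.4]{ANR} where you instead re-apply Theorem \ref{I+hH} with the principal ideal $(Y)$ (which is valid), and your guess that this input is Lemma \ref{Normality-H} is mistaken --- that lemma concerns dominating ideals of cycles --- but this is immaterial since your argument for $H$ is self-contained and correct.
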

 \begin{proof}
(i)  For convenience of notation, put $L:=NI(K_{r,s})$.
 Let $V(K_{r,s})=V_1 \cup V_2$, where $V_1=\{x_1, \ldots, x_r\}$ and   $V_2=\{x_{r+1}, \ldots, x_{r+s}\}$.
 If $r=1$, then $$L=(x_1\prod_{i=2}^{s+1}x_i, x_1x_2, \ldots, x_1x_{s+1})=x_1(x_2, \ldots, x_{s+1}).$$
 It is  well-known that any prime monomial ideal is normal, and by virtue of \cite[Remark 1.2]{ANR}, one has $L$ is normal.
 Similarly, if  $s=1$, then $L$ is normal too. Accordingly, we assume that $r,s>1$.
 Set
  $g:=\prod_{i=2}^rx_i$ and $f:=\prod_{i=r+1}^{r+s}x_i$. It is easy to check that
  $\mathcal{G}(L)=\{x_1f, \ldots, x_rf, x_1x_{r+1}g, \ldots, x_1x_{r+s}g\}.$ Hence, one can write $L=I+x_1H$, where
  $H:=(f, x_{r+1}g, \ldots, x_{r+s}g)$ and
  $I:=(x_2, \ldots, x_r)f$.
  A similar argument shows that $I$ is normal.   Since
 $H=g(x_{r+1}, \ldots, x_{r+s})+fR,$  where
  $R=K[x_1, \ldots, x_r, x_{r+1}, \ldots, x_{r+s}]$, it follows from \cite[Theorem 1.4]{ANR} that $H$ is normal.
  It is not hard to check that $I+H=H$, and so $I+H$ is normal as well. Note that $\gcd(x_1, v)=1$ for all $v\in \mathcal{G}(I) \cup \mathcal{G}(H)$.
   Now, the claim can be deduced immediately from Theorem \ref{I+hH}. \par
  (ii) and (iii) are trivial. This finishes the proof.
 \end{proof}

 %%%%%%%%%%%%%%%%%%%%%%%%%%%%%%%%%%%%%%%%%%%%%%%%%%

In the following proposition, we investigate when
the unique homogeneous maximal ideal appears in the associated primes set of powers of closed neighborhood ideals of complete bipartite graphs.

 \begin{proposition} \label{Maximal-Bipartite}
 Let $K_{r,s}$ with $r,s>1$ be a complete bipartite graph, $V(K_{r,s})=V_1 \cup V_2$, where $V_1=\{x_1, \ldots, x_r\}$ and   $V_2=\{x_{r+1}, \ldots, x_{r+s}\}$,  and
$$\mathfrak{m}=(x_1, \ldots, x_r, x_{r+1}, \ldots, x_{r+s})
\subset R=K[x_1, \ldots, x_r, x_{r+1}, \ldots, x_{r+s}],$$
be  the unique homogeneous maximal ideal.
 Then, for all $s\geq 3$,
 $$\mathfrak{m} \in \mathrm{Ass}(NI(K_{r,s})^s)\setminus
 \left(\mathrm{Ass}(NI(K_{r,s})) \cup \mathrm{Ass}(NI(K_{r,s})^2)\right).$$

 In particular,
 $\mathrm{depth}(R/NI(K_{r,s})^s)=0$ for all $s\geq 3$, and
 $$\mathrm{lim}_{k \rightarrow \infty}\mathrm{depth} (R/NI(K_{r,s})^k)=0.$$
 \end{proposition}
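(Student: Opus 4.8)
\textbf{Proof plan for Proposition \ref{Maximal-Bipartite}.}

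The plan is to exhibit explicit monomials witnessing membership in the relevant associated-prime sets and, crucially, a monomial witnessing $\mathfrak{m} \in \mathrm{Ass}(NI(K_{r,s})^s)$ for $s \geq 3$, then to handle the non-membership in $\mathrm{Ass}(NI(K_{r,s}))$ and $\mathrm{Ass}(NI(K_{r,s})^2)$ separately. Recall from the proof of Theorem \ref{NI-Bipartite} that with $g = \prod_{i=2}^r x_i$ and $f = \prod_{i=r+1}^{r+s} x_i$, the generators of $L := NI(K_{r,s})$ are $x_1 f, \ldots, x_r f$ together with $x_1 x_{r+1} g, \ldots, x_1 x_{r+s} g$. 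All generators have the same degree $s+1$, so $L$ is generated in a single degree, and the strategy is to combine Corollary \ref{Cor. Depth} (via Theorem \ref{I+hH}(v), whose hypotheses we must verify) with a direct colon-ideal computation to pin down exactly when $\mathfrak{m}$ enters.

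First I would establish the positive statement $\mathfrak{m} \in \mathrm{Ass}(L^s)$ by finding a monomial $w$ with $(L^s :_R w) = \mathfrak{m}$; equivalently, I would show $\mathrm{depth}(R/L^s) = 0$. The natural candidate is to invoke Theorem \ref{I+hH}(v): I would verify that the linear relation graph $\Gamma_L$ has all $n = r+s$ vertices and a single connected component. The edges of $\Gamma_L$ come from relations $x_i u_k = x_j u_l$ among generators of equal degree; the two families $\{x_i f\}$ and $\{x_1 x_{r+j} g\}$ should each be internally connected (e.g. $x_i \cdot x_{i'} f = x_{i'} \cdot x_i f$ links the $x_i$'s, and the $x_{r+j}$'s link similarly), and a cross-relation between the two families should fuse them into one component covering every variable. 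Once $\Gamma_L$ has $n$ vertices and one component, Theorem \ref{I+hH}(v) gives $\mathfrak{m} \in \mathrm{Ass}(R/L^k)$ for all $k \geq n-1$; but this only yields large $k$, whereas the claim asserts membership already at $k = s$. So the main obstacle is sharpening the threshold from $n-1 = r+s-1$ down to exactly $s$: I expect to do this by an explicit construction, namely choosing a monomial such as $\beta = (x_2 \cdots x_r)(x_{r+1} \cdots x_{r+s})$ raised to an appropriate power, or a carefully balanced product of the two generator types, and verifying directly that $(L^s : \beta) = \mathfrak{m}$ by checking $x_t \beta \in L^s$ for every variable $x_t$ while $\beta \notin L^s$.

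Next I would dispatch the non-membership claims $\mathfrak{m} \notin \mathrm{Ass}(L) \cup \mathrm{Ass}(L^2)$. Since $L$ is a normal (hence integrally closed) monomial ideal, $\mathrm{Ass}(L) = \mathrm{Min}(L)$ consists only of monomial primes, and I would compute the minimal primes from the vertex-cover / Alexander-dual structure to confirm $\mathfrak{m}$ is not among them (it cannot be, as $L \neq \mathfrak{m}^{\deg}$ and has embedded-free first power). For $L^2$, I would argue that any monomial prime $\mathfrak{p} \in \mathrm{Ass}(L^2)$ arising as $(L^2 : w)$ cannot equal the full $\mathfrak{m}$ when $s = 2$ fails the degree count but $s \geq 3$ allows it; concretely, I would show $\mathrm{depth}(R/L^2) > 0$ by exhibiting a variable $x_t$ that is a nonzerodivisor on $R/L^2$, or equivalently showing no monomial $w$ satisfies $(L^2 : w) = \mathfrak{m}$. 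The cleanest route is a degree/counting argument: a witness for $\mathfrak{m}$ in $\mathrm{Ass}(L^k)$ forces enough generators of each type to cover all variables simultaneously after multiplying by any single $x_t$, and I expect the arithmetic to show this first becomes possible precisely at $k = s$ (the number of $x_{r+j}$ variables), which is why the hypothesis $s \geq 3$ is exactly what separates $L^s$ from $L^2$.

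Finally, the two displayed consequences follow formally: $\mathfrak{m} \in \mathrm{Ass}(R/L^s)$ is equivalent to $\mathrm{depth}(R/L^s) = 0$ by the standard characterization of depth via associated primes, and the limit statement $\lim_{k \to \infty} \mathrm{depth}(R/L^k) = 0$ follows because $L$ has the persistence property (Theorem \ref{NI-Bipartite}(iii)), so once $\mathfrak{m} \in \mathrm{Ass}(R/L^s)$ it persists for all $k \geq s$, forcing the depth to remain $0$. I anticipate the combinatorial threshold argument in the second paragraph to be the genuine difficulty; the non-membership at $k \leq 2$ and the limiting statement should both be routine once the explicit witness $\beta$ at level $s$ is in hand.
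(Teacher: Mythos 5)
Your plan has a fatal flaw at its center: the claim that ``all generators have the same degree $s+1$, so $L$ is generated in a single degree'' is false unless $r=s$. Writing $g=\prod_{i=1}^{r}x_i$ and $f=\prod_{i=r+1}^{r+s}x_i$, one has $\mathcal{G}(L)=\{x_1f,\ldots,x_rf,\,x_{r+1}g,\ldots,x_{r+s}g\}$, with the first family of degree $s+1$ and the second of degree $r+1$. So Theorem \ref{I+hH}(v) and Corollary \ref{Cor. Depth} are simply inapplicable in general (and even where they apply they only reach powers $k\geq r+s-1$, far above the target). Your fallback --- an explicit witness --- is the right kind of move, but you never execute it, and your guiding heuristic that membership ``first becomes possible precisely at $k=s$'' is wrong: the actual threshold is $k=3$ independently of $r$ and $s$. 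The paper takes $h:=\prod_{i=1}^{r+s}x_i^2$ and proves $\mathfrak{m}=(L^3:h)$ directly: for $1\leq k\leq r$ the product $(x_{r+1}g)(x_{r+2}g)(x_kf)$ divides $x_kh$, and symmetrically $(x_1f)(x_2f)(x_kg)$ divides $x_kh$ for $r+1\leq k\leq r+s$, while $h\notin L^3$ by a four-case check (three factors from one family force $f^3$ or $g^3$ to divide $h$; mixed choices force $x_1^3$ or $x_{r+1}^3$ to divide $h$; all impossible since every variable has degree $2$ in $h$). Membership at the power $s$ then follows from the persistence property of Theorem \ref{NI-Bipartite}(iii) together with $s\geq 3$ --- which is exactly the role of that hypothesis, not a combinatorial threshold at $k=s$.

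Your treatment of the non-membership claims also has gaps. The tactic of ``exhibiting a variable $x_t$ that is a nonzerodivisor on $R/L^2$'' cannot succeed: every variable divides some minimal generator $u$ of $L^2$, so $x_t(u/x_t)\in L^2$ with $u/x_t\notin L^2$, i.e.\ \emph{every} variable is a zerodivisor (and in general $\mathfrak{m}\notin\mathrm{Ass}$ only produces a nonzerodivisor that is a linear combination of variables, as already for $(xy)\subset K[x,y]$). The missing idea is a bound on the witness: the paper invokes \cite[Proposition 4.8]{SNQ} to assume that any monomial $v$ with $(L^2:v)=\mathfrak{m}$ satisfies $\deg_{x_i}(v)\leq 1$ for all $i$; since every minimal generator $w$ of $L^2$ has two \emph{distinct} variables of degree $2$ (a product of two generators contains $f^2$, $g^2$, or $fg$ times two extra variables), $w$ cannot divide $x_dv$ for a single variable $x_d$, a contradiction. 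Without such a reduction your ``degree/counting argument'' has nothing to count over. Finally, a small but real error of justification: $\mathrm{Ass}(L)=\mathrm{Min}(L)$ holds because $L$ is a \emph{square-free} (radical) monomial ideal, not because it is normal or integrally closed --- the ideal $(x^2,xy)=x(x,y)$ is integrally closed yet has the embedded prime $(x,y)$. Your closing paragraph (depth zero from $\mathfrak{m}\in\mathrm{Ass}$, and the limit statement via persistence) does agree with the paper and is fine.
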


 \begin{proof}
 Set $L:=NI(K_{r,s})$,   $g:=\prod_{i=1}^rx_i$,  and $f:=\prod_{i=r+1}^{r+s}x_i$. It easy to see  that
  $\mathcal{G}(L)=\{x_1f, \ldots, x_rf, x_{r+1}g, \ldots, x_{r+s}g\}.$
  Due to   $L$ is a square-free monomial ideal, this implies that  $\mathrm{Ass}(L)=\mathrm{Min}(L)$, and hence  $\mathfrak{m}\notin \mathrm{Ass}(L).$
 We show that  $\mathfrak{m}\notin \mathrm{Ass}(L^2).$
  Suppose, on the contrary, that
 $\mathfrak{m}\in \mathrm{Ass}(L^2)$. This gives that
$\mathfrak{m}=(L^2:v)$ for some monomial $v\in R$.
     By virtue of \cite[Proposition 4.8]{SNQ}, one has
    $\mathrm{deg}_{x_i}(v) \leq 1$ for all $i=1, \ldots, r+s$.
   Let $x_d \in \mathfrak{m}$ for some $1\leq d \leq r+s$. Hence, $x_dv\in L^2$, and so there exists some monomial $w\in  \mathcal{G}(L^2)$ such that $w\mid x_dv$.
  In addition,  it follows from $\mathcal{G}(L)=\{x_1f, \ldots, x_rf, x_{r+1}g, \ldots, x_{r+s}g\}$ that
   there exist $i$ and $j$ with $1\leq i\neq j \leq r+s$ such that
   $\mathrm{deg}_{x_i}(w)=\mathrm{deg}_{x_j}(w)=2.$
 Since $\mathrm{deg}_{x_i}(v) \leq 1$ and $\mathrm{deg}_{x_j}(v) \leq 1$, this leads to a contradiction. Consequently, $\mathfrak{m}\notin \mathrm{Ass}(L^2)$.
  Based on  Theorem \ref{NI-Bipartite}(iii),  the ideal $L$ satisfies the persistence property. Hence, it is enough  for us to show that $\mathfrak{m}\in \mathrm{Ass}(L^3)$.
 To accomplish this,    set  $h:=\prod_{i=1}^{r+s}x^2_i$.
 We claim that $h\notin L^3$ and $\mathfrak{m} \subseteq (L^3:h)$.   We show that $x_k h \in L^3$ for all $1\leq k\leq r+s$. Let  $1\leq k\leq r$. Set
 $\alpha:=(x_{r+1}g)(x_{r+2}g)(x_kf)$.
 Because $\alpha \in L^3$ and $\alpha \mid x_kh$, one can derive that $x_kh\in L^3$. Now, let    $r+1\leq k\leq r+s$.
 Put $\beta:=(x_1f)(x_2f)(x_{k}g)$.
 Since $\beta \in L^3$ and $\beta \mid x_kh$, this implies that
 $x_kh\in L^3$. Therefore, we deduce that
 $\mathfrak{m} \subseteq (L^3:h)$. On the contrary assume that  $h\in L^3$. Thus, there exist monomials $u_1, u_2, u_3 \in \mathcal{G}(L)$ such that $u_1u_2u_3 \mid h$.  If
 $u_1, u_2, u_3 \in \{x_1f, \ldots, x_rf\}$ (respectively, $u_1, u_2, u_3 \in \{x_{r+1}g, \ldots, x_{r+s}g\}$), then $f^3 \mid h$ (respectively, $g^3 \mid h$), which contradicts the fact that $\mathrm{deg}_{x_i}(h)=2$ for all $i$.   Let
 $u_1, u_2 \in \{x_{r+1}g, \ldots, x_{r+s}g\}$,  and
 $u_3 \in  \{x_1f, \ldots, x_rf\}$, say $u_3=x_1f$. Then, we get $x_1^3 \mid u_1u_2u_3$, and so $x_1^3 \mid h$, which is a contradiction.
 Finally, let
 $u_1, u_2 \in \{x_1f, \ldots, x_rf\}$, and
 $u_3 \in   \{x_{r+1}g, \ldots, x_{r+s}g\}$, say $u_3=x_{r+1}g$. This gives rise to  $x_{r+1}^3 \mid u_1u_2u_3$, and so $x_{r+1}^3 \mid h$, which is a contradiction. Accordingly, $h\notin L^3$, and thus, $\mathfrak{m}=(L^3:h)$. This completes the proof.
  \end{proof}

 %%%%%%%%%%%%%%%%%%%%%%%%%%%%%%%%%%%%%%%%%%%%%%%%%%

We are in a position to state another main result of this section in the subsequent theorem. To see this, we have to use the corollary below as our main tool. 
For this purpose,  we first recall the definition of the monomial localization of a monomial ideal with respect to a monomial prime ideal. 
 Let $I$ be a monomial ideal in a polynomial ring $R=K[x_1, \ldots, x_n]$ over a field $K$.
   Let $\mathfrak{p}=(x_{i_1}, \ldots, x_{i_r})$ be a monomial prime ideal. The {\it monomial localization} of $I$ with respect to $\mathfrak{p}$, denoted by $I(\mathfrak{p})$, is the ideal in the polynomial ring $R(\mathfrak{p})=K[x_{i_1}, \ldots, x_{i_r}]$  which is obtained from $I$ by applying the $K$-algebra homomorphism $R\rightarrow R(\mathfrak{p})$  with $x_j\mapsto 1$  for all $x_j\notin \{x_{i_1}, \ldots, x_{i_r}\}$.
Moreover, it should be noted that $\mathfrak{m}\setminus \{x_i\}=(x_1, \ldots, x_{i-1}, x_{i+1}, \ldots, x_n)$, where $1\leq i \leq n$.

 \begin{corollary} \label{NNTF} \cite[Corollary 3.3]{NQKR}
Let $I$ be a square-free  monomial ideal in a polynomial ring $R=K[x_1, \ldots, x_n]$ over a field $K$.  Let   $I(\mathfrak{m}\setminus \{x_i\})$ be  normally torsion-free  for all $i=1, \ldots, n$. Then $I$ is nearly normally torsion-free.
\end{corollary}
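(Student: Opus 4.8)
The plan is to control $\mathrm{Ass}_R(R/I^k)$ one prime at a time, using the monomial localization hypothesis to eliminate every monomial prime except the irrelevant maximal ideal $\mathfrak{m}=(x_1,\dots,x_n)$ as a possible ``extra'' associated prime. Since $I$ is a monomial ideal, every element of $\mathrm{Ass}_R(R/I^k)$ is a monomial prime $\mathfrak{q}=(x_j : j\in A)$ for some $A\subseteq\{1,\dots,n\}$, so it suffices to analyze such $\mathfrak{q}$, and the natural dichotomy is whether $\mathfrak{q}=\mathfrak{m}$ or not. The main tool I would record first is the standard monomial localization correspondence: fixing a variable $x_i$, writing $S_i=K[x_1,\dots,\widehat{x_i},\dots,x_n]$, and letting $I(\mathfrak{m}\setminus\{x_i\})$ be the image of $I$ under the substitution $x_i\mapsto 1$, a monomial prime $\mathfrak{q}$ with $x_i\notin\mathfrak{q}$ lies in $\mathrm{Ass}_R(R/I^k)$ if and only if its image lies in $\mathrm{Ass}_{S_i}(S_i/I(\mathfrak{m}\setminus\{x_i\})^k)$. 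This rests on the fact that localizing at the multiplicative set generated by $x_i$ leaves unchanged the associated primes avoiding $x_i$, and that monomial localization computes this localization directly on the monomial generators.

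Next I would take any $\mathfrak{q}\in\mathrm{Ass}_R(R/I^k)$ with $\mathfrak{q}\neq\mathfrak{m}$. Then some variable $x_i\notin\mathfrak{q}$, so the correspondence places the image of $\mathfrak{q}$ in $\mathrm{Ass}_{S_i}(S_i/I(\mathfrak{m}\setminus\{x_i\})^k)$. By hypothesis $I(\mathfrak{m}\setminus\{x_i\})$ is normally torsion-free, that is, $\mathrm{Ass}_{S_i}(S_i/I(\mathfrak{m}\setminus\{x_i\})^k)=\mathrm{Min}(I(\mathfrak{m}\setminus\{x_i\}))$ for every $k\geq 1$. Hence the image of $\mathfrak{q}$ is a minimal prime of $I(\mathfrak{m}\setminus\{x_i\})$, and reversing the same correspondence in the case $k=1$ shows $\mathfrak{q}\in\mathrm{Min}(I)$. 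Combining the two cases of the dichotomy yields the key containment $\mathrm{Ass}_R(R/I^k)\subseteq\mathrm{Min}(I)\cup\{\mathfrak{m}\}$ for all $k\geq 1$.

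To assemble the ``nearly'' conclusion, I would note that $I$ is square-free, hence radical, so $\mathrm{Ass}_R(R/I)=\mathrm{Min}(I)$; and since $\sqrt{I^k}=\sqrt{I}$, the minimal primes persist, giving $\mathrm{Min}(I)\subseteq\mathrm{Ass}_R(R/I^k)$ for every $k$. Therefore each $\mathrm{Ass}_R(R/I^k)$ equals either $\mathrm{Min}(I)$ or $\mathrm{Min}(I)\cup\{\mathfrak{m}\}$. Letting $k$ be the largest index such that $\mathfrak{m}\notin\mathrm{Ass}_R(R/I^j)$ for all $j\leq k$ (and taking $k$ to be any positive integer, with $\mathfrak{p}=\mathfrak{m}$, should $\mathfrak{m}$ never appear), the two defining conditions of nearly normally torsion-free hold verbatim with $\mathfrak{p}=\mathfrak{m}$: equality to $\mathrm{Min}(I)$ up to index $k$, and containment in $\mathrm{Min}(I)\cup\{\mathfrak{m}\}$ beyond it.

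I expect the second step to be the main obstacle, namely pinning down the monomial localization correspondence precisely and, in particular, verifying that a minimal prime of $I(\mathfrak{m}\setminus\{x_i\})$ lifts back to a genuine \emph{minimal} prime of $I$ rather than merely an associated one. The care is needed because monomial localization can collapse generators (for instance when $x_i$ is itself a minimal generator of $I$), so one must confirm that the radical and minimal-prime structure transfers cleanly under the substitution $x_i\mapsto 1$. Once that transfer is secured, the remainder is routine bookkeeping over the two cases and the choice of threshold $k$.
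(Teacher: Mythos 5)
The paper offers no proof of this corollary at all---it is imported verbatim from \cite[Corollary 3.3]{NQKR}, marked ``our main tool''---so the only question is whether your blind argument stands on its own, and it does: the monomial localization correspondence $\mathrm{Ass}_{S_i}\bigl(S_i/I(\mathfrak{m}\setminus \{x_i\})^k\bigr)=\{\mathfrak{q}\in \mathrm{Ass}_R(R/I^k)\,:\,x_i\notin \mathfrak{q}\}$ (valid because the substitution $x_i\mapsto 1$ computes localization at the powers of $x_i$ and commutes with taking powers of $I$), applied once with the normal torsion-freeness hypothesis and once at $k=1$ together with radicality of the square-free ideal $I$ to pull minimality back, correctly yields $\mathrm{Ass}_R(R/I^k)\subseteq \mathrm{Min}(I)\cup\{\mathfrak{m}\}$ for all $k\geq 1$, which with $\mathfrak{p}=\mathfrak{m}$ witnesses the definition of nearly normally torsion-free; this localization route is exactly the one taken in the cited source. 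Your concern about lifting minimal primes is properly resolved by the $k=1$ step, and your careful choice of the threshold is harmless but unnecessary, since $\mathrm{Ass}_R(R/I)=\mathrm{Min}(I)$ already makes $k=1$ suffice.
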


\begin{theorem} \label{DI-Bipartite}
The dominating  ideals of  complete bipartite graphs are  nearly normally torsion-free.
\end{theorem}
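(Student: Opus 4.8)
The plan is to deduce the statement from Corollary \ref{NNTF}. Write $V(K_{r,s}) = V_1 \cup V_2$ with $V_1 = \{x_1, \ldots, x_r\}$ and $V_2 = \{x_{r+1}, \ldots, x_{r+s}\}$, and set $g = \prod_{i=1}^r x_i$, $f = \prod_{i=r+1}^{r+s} x_i$. First I would record the minimal generators of $DI(K_{r,s})$. Dualizing the description $\mathcal{G}(NI(K_{r,s})) = \{x_1 f, \ldots, x_r f, x_{r+1} g, \ldots, x_{r+s} g\}$ from Proposition \ref{Maximal-Bipartite} (equivalently, listing the minimal dominating sets, which are the pairs $\{x_a, x_b\}$ with $a \in V_1$, $b \in V_2$ together with the two full sides $V_1$ and $V_2$), one obtains
\[
DI(K_{r,s}) = (x_a x_b : a \in V_1,\ b \in V_2) + (g) + (f),
\]
a square-free monomial ideal, so Corollary \ref{NNTF} is available. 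By that corollary it suffices to prove that the monomial localization $DI(K_{r,s})(\mathfrak{m} \setminus \{x_i\})$ --- the ideal obtained from $DI(K_{r,s})$ by setting $x_i = 1$ --- is normally torsion-free for every variable $x_i$.

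Next I would compute these localizations. By the symmetry interchanging $V_1$ and $V_2$ there are only two cases. Setting $x_1 = 1$ (a vertex of $V_1$) reduces $g$ to $\prod_{a=2}^r x_a$ and turns each $x_1 x_b$ into the variable $x_b$; these variables absorb all remaining degree-two generators and also $f$, leaving
\[
DI(K_{r,s})(\mathfrak{m} \setminus \{x_1\}) = (x_{r+1}, \ldots, x_{r+s}) + (x_2 \cdots x_r),
\]
and symmetrically $DI(K_{r,s})(\mathfrak{m} \setminus \{x_{r+1}\}) = (x_1, \ldots, x_r) + (x_{r+2} \cdots x_{r+s})$. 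In every case the localization has the shape $\mathfrak{p} + (w)$, where $\mathfrak{p}$ is a monomial prime generated by variables and $w$ is a square-free monomial whose support is disjoint from $\mathfrak{p}$ (when $w$ is a single variable this is already a prime). So everything reduces to the following lemma, which I would isolate: \emph{if $\mathfrak{p} = (x_{i_1}, \ldots, x_{i_t})$ is a monomial prime and $w = x_{j_1} \cdots x_{j_d}$ is a square-free monomial with $\mathrm{supp}(w) \cap \{x_{i_1}, \ldots, x_{i_t}\} = \emptyset$, then $\mathfrak{p} + (w)$ is normally torsion-free.}

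For the lemma I would first observe the decomposition $\mathfrak{p} + (w) = \bigcap_{l=1}^d (\mathfrak{p} + (x_{j_l}))$, so that $\mathrm{Min}(\mathfrak{p} + (w)) = \{\mathfrak{p} + (x_{j_l}) : 1 \le l \le d\}$ and every associated prime of a power of $J := \mathfrak{p}+(w)$ contains $\mathfrak{p}$. Since minimal primes are always associated, it remains to rule out embedded primes. Any candidate embedded prime $\mathfrak{q}$ contains $\mathfrak{p}$ together with at least two of the $x_{j_l}$; localizing at $\mathfrak{q}$ (setting the remaining $x_{j_l}$ equal to $1$) reduces the claim, via the standard equivalence $\mathfrak{q} \in \mathrm{Ass}(R/J^m) \iff \mathfrak{q}(\mathfrak{q}) \in \mathrm{Ass}(R(\mathfrak{q})/J(\mathfrak{q})^m)$, to showing that the homogeneous maximal ideal $\bar{\mathfrak{m}}$ of $K[x_{i_1}, \ldots, x_{i_t}, y_1, \ldots, y_e]$ is not associated to any power of $Q := \mathfrak{p} + (y_1 \cdots y_e)$ when $e \ge 2$. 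Here I would use the membership criterion
\[
v \in Q^m \iff \deg_{\mathfrak{p}}(v) + \min_{1 \le j \le e} \deg_{y_j}(v) \ge m,
\]
where $\deg_{\mathfrak{p}}(v)$ is the total degree of $v$ in the variables $x_{i_1}, \ldots, x_{i_t}$. If some monomial $u \notin Q^m$ satisfied $\bar{\mathfrak{m}} u \subseteq Q^m$, then $x_{i_l} u \in Q^m$ (using a generator of $\mathfrak{p}$, so $t \ge 1$, which holds in our situation) forces $\deg_{\mathfrak{p}}(u) + \min_j \deg_{y_j}(u) = m-1$, while $y_j u \in Q^m$ for \emph{every} $j$ would force each $y_j$ to be the strict unique minimizer of $\deg_{y_j}(u)$ --- impossible for $e \ge 2$. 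This contradiction gives $\bar{\mathfrak{m}} \notin \mathrm{Ass}$, completing the lemma and hence the theorem.

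The main obstacle is precisely this last lemma: identifying the minimal generators and computing the localizations is routine, but proving that the disjoint sum $\mathfrak{p} + (w)$ carries no embedded primes --- the monomial-localization reduction together with the ``which variable-bump raises the minimum'' counting argument --- is the substantive step, and it is exactly where the hypothesis $e \ge 2$ (equivalently $\deg w \ge 2$) is used.
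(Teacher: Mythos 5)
Your proposal is correct and follows essentially the same route as the paper: both reduce the theorem to Corollary \ref{NNTF}, compute each monomial localization $DI(K_{r,s})(\mathfrak{m}\setminus\{x_i\})$ to be of the form (a monomial prime) $+$ (a square-free monomial with disjoint support), and conclude that these localizations are normally torsion-free. The only difference is that where the paper simply cites \cite[Theorem 2.5]{SN} for this last step, you prove it from scratch via the decomposition $\mathfrak{p}+(w)=\bigcap_{l=1}^{d}\bigl(\mathfrak{p}+(x_{j_l})\bigr)$, the localization reduction, and the degree-counting argument excluding the maximal ideal --- and that self-contained argument is correct.
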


\begin{proof}
 Let $K_{r,s}$ be a complete bipartite graph, $R=K[x_1, \ldots, x_r, x_{r+1}, \ldots, x_{r+s}]$, and
$\mathfrak{m}=(x_1, \ldots, x_r, x_{r+1}, \ldots, x_{r+s})$
be  the unique homogeneous maximal ideal.
 Let $V(K_{r,s})=V_1 \cup V_2$, where $V_1=\{x_1, \ldots, x_r\}$ and   $V_2=\{x_{r+1}, \ldots, x_{r+s}\}$. Put
 $L:=DI(K_{r,s})$, $I:=(x_i~:~i=1, \ldots, r)$, $J:=(x_i~:~i=r+1, \ldots, r+s)$,
  $g:=\prod_{i=1}^rx_i$, and $f:=\prod_{i=r+1}^{r+s}x_i$.
  According to \cite[Lemma 2.2]{SM}, and  \cite[Exercise 6.1.23]{V1},  we get the following equalities:
\begin{align*}
L=& \bigcap_{i=1}^r(J+x_iR) \cap \bigcap_{i=r+1}^{r+s}(I+x_iR) \\
=& (J+\bigcap_{i=1}^rx_iR) \cap (I+\bigcap_{i=r+1}^{r+s}x_iR) \\
=&J\cap I + \bigcap_{i=1}^rx_iR + \bigcap_{i=r+1}^{r+s}x_iR \\
=& JI+ gR + fR.
\end{align*}
In what follows, our aim is to use Corollary \ref{NNTF}. To do this, without loss of generality, it is enough to show that
$L(\mathfrak{m}\setminus \{x_1\})$ is  normally torsion-free.
Since $I(\mathfrak{m}\setminus \{x_1\})=R$,
  $J(\mathfrak{m}\setminus \{x_1\})=J$, $fR(\mathfrak{m}\setminus \{x_1\})=fR$, and
    $gR(\mathfrak{m}\setminus \{x_1\})=\prod_{i=2}^rx_iR$, we obtain
   $$L(\mathfrak{m}\setminus \{x_1\})=J+ \prod_{i=2}^rx_iR + fR=J+ \prod_{i=2}^rx_iR. $$
   It follows now from \cite[Theorem 2.5]{SN} that
   $L(\mathfrak{m}\setminus \{x_1\})$
    is normally torsion-free. Therefore, the claim can be deduced from Corollary \ref{NNTF}, and the proof is done.
\end{proof}

 %%%%%%%%%%%%%%%%%%%%%%%%%%%%%%%%%%%%%%%%%%%%%%%%%%
%%%%%%%%%%%%%%%%%%%%%%%%%%%%%%%%%%%%%%%%%%%%%%%%%%

 \section{Dominating ideals of $h$-wheel graphs}
\label{wheel}
The main aim of this section is to explore dominating ideals of  a class of graphs which are  called $h$-wheel graphs. For this purpose, we start by stating the definition of  $h$-wheel graphs.

%%%%%%%%%%%%%%%%%%%%%%%%%%%%%%%%%%%%%%%%%%%%%%%%%%

\begin{definition} \cite[Definition 1.6]{BHLSW}
A graph $G$ with the vertex set $V(G)$ is called an  {\it $h$-wheel}
if $V(G)$ can be written as the union of two disjoint sets, the set of rim vertices $R^G$ and the set of center vertices $C^G$, such that the following conditions hold:
\begin{itemize}
\item[(1)] The subgraph induced by $C^G$ is the complete  graph on $h$ vertices.
\item[(2)]  The subgraph induced by $R^G$ is an odd cycle.
\item[(3)] There exist $x_{i_1}, \ldots, x_{i_k} \in R^G$ with $k\geq 3$ such that $N_{R^G}(y)=\{x_{i_1}, \ldots, x_{i_k}\}$ for all $y\in C^G$.
\item[(4)] For every $y\in C^G$, the vertex $y$ belongs to at least two odd cycles in the subgraph induced by $y$ and $N_{R^G}(y)$.
\end{itemize}
\end{definition}

The $k$ is called the {\it radial number} for $G$. Also, for each $j=1, \ldots, k-1$, set $\ell_i$ as the length of the path along the subgraph induced by $R^G$ from $x_{i_j}$ to $x_{i_{j+1}}$, and set $\ell_k$ as the length from $x_{i_k}$ to $x_{i_1}$. The positive integers $\ell_1, \ldots, \ell_k$ are called the {\it radial lengths}.

It should be noted that, in \cite[Page  265]{KPS}, the authors studied the $1$-wheel, which we call a wheel for simplicity. In fact, given an $h$-wheel $G$ and a vetex $y\in C^G$, the subgraph induced by $y$ and $R^G$ is a  wheel.

%%%%%%%%%%%%%%%%%%%%%%%%%%%%%%%%%%%%%%%%%%%%%%%%
\begin{example}
We give a $4$-wheel graph $G$ in the following figure.

\begin{figure}[h!]
\centering
\begin{tikzpicture}[scale=.7]
\tikzset{vertex/.style = {shape = circle,fill = black,
inner sep = 0pt, outer sep = 0pt,minimum size = 7pt,}}
\node[label={[label distance=-0.01cm]180:$y_1$}] (y1) at (135:2) [vertex] {};
\node[label={[label distance=-0.01cm]0:$y_2$}] (y2) at (45:2) [vertex] {};
\node[label={[label distance=-0.01cm]0:$y_3$}] (y3) at (-45:2) [vertex] {};
\node[label={[label distance=-0.01cm]180:$y_4$}] (y4) at (-135:2) [vertex] {};
\draw[thick] (y1)--(y2)--(y3)--(y4)--(y1);
\draw[thick] (y1)--(y3);
\draw[thick] (y2)--(y4);

\node[label={[label distance=-0.01cm]90:$x_1$}] (x1) at (90:4) [vertex] {};
\node[label={[label distance=-0.01cm]180:$x_7$}] (x7) at (141:4) [vertex] {};
\node[label={[label distance=-0.01cm]180:$x_6$}] (x6) at (193:4) [vertex] {};
\node[label={[label distance=-0.01cm]270:$x_5$}] (x5) at (245:4) [vertex] {};
\node[label={[label distance=-0.01cm]270:$x_4$}] (x4) at (296:4) [vertex] {};
\node[label={[label distance=-0.01cm]0:$x_3$}] (x3) at (348:4) [vertex] {};
\node[label={[label distance=-0.01cm]0:$x_2$}] (x2) at (39:4) [vertex] {};
\draw[thick] (x1)--(x2)--(x3)--(x4)--(x5)--(x6)--(x7)--(x1);
\draw[thick] (y1)--(x1);
\draw[thick] (y1)--(x2);
\draw[thick] (y1)--(x5);
\draw[thick] (y1)--(x1);
\draw[thick] (y2)--(x1);
\draw[thick] (y2)--(x2);
\draw[thick] (y2)--(x5);
\draw[thick] (y3)--(x1);
\draw[thick] (y3)--(x2);
\draw[thick] (y3)--(x5);
\draw[thick] (y4)--(x1);
\draw[thick] (y4)--(x2);
\draw[thick] (y4)--(x5);
\node at(0,-6){4-$\text{wheel graph}$};
\end{tikzpicture}
\end{figure}

Note that $C^G=\{y_1, y_2, y_3, y_4\}$, $R^G=\{x_1, x_2, x_3, x_4, x_5, x_6, x_7\},$  and
$$N_{R^G}(y_1)=N_{R^G}(y_2)=N_{R^G}(y_3)=N_{R^G}(y_4)=\{x_1, x_2, x_5\}.$$

\end{example}

%%%%%%%%%%%%%%%%%%%%%%%%%
%%%%%%%%%%%%%%%%%%%%%%%%%

The following result is  necessary for us to  show Lemma \ref{Normality-H}.   We state it   here for ease of reference.

\begin{lemma} \label{Normal-Intersection} \cite[Lemma 2.5]{ANKRQ}
Suppose that $I$ and $J$ are two normal monomial
 deals in $R=K[x_1, \ldots, x_n]$ such that   $\mathrm{gcd}(u,v)=1$ for all $u\in \mathcal{G}(I)$ and $v\in \mathcal{G}(J)$.  Then $I\cap J=IJ$ is normal.
\end{lemma}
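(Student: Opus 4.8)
The plan is to prove the statement $I \cap J = IJ$ is normal, for normal monomial ideals $I,J$ whose generators are pairwise coprime across the two ideals. I would split the claim into two parts: first establish the \emph{ideal equality} $I \cap J = IJ$, and only then establish \emph{normality} of that common ideal. For the equality, I would argue at the level of monomials. A monomial $w$ lies in $I \cap J$ if and only if $w$ is divisible by some $u \in \mathcal{G}(I)$ and by some $v \in \mathcal{G}(J)$. The coprimality hypothesis $\gcd(u,v)=1$ for all $u \in \mathcal{G}(I)$, $v \in \mathcal{G}(J)$ means $u$ and $v$ involve disjoint sets of variables, so divisibility of $w$ by both $u$ and $v$ forces divisibility by the product $uv$, whence $w \in IJ$. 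The reverse inclusion $IJ \subseteq I \cap J$ is immediate. This gives $I \cap J = IJ$ as monomial ideals.

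For normality, the natural route is to use the well-known fact that the product of two normal ideals on disjoint variable sets is normal, since the Rees algebra of $IJ$ essentially decomposes as a tensor-type product of the Rees algebras of $I$ and $J$ over the base field. Concretely, I would pick an arbitrary monomial $\alpha \in \overline{(IJ)^t}$ and use the Ree-s/integral-closure characterization from \cite[Theorem 1.4.2]{HH1}: there is some $k$ with $\alpha^k \in (IJ)^{tk} = I^{tk} J^{tk}$. Since the variable sets of $I$ and $J$ are disjoint, I would factor $\alpha = \alpha_I \alpha_J$, where $\alpha_I$ is the part supported on the variables appearing in $\mathcal{G}(I)$ (together with any ``outside'' variables assigned to one side) and $\alpha_J$ the complementary part supported on the variables of $\mathcal{G}(J)$. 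The disjointness forces $\alpha_I^k \in I^{tk}$ and $\alpha_J^k \in J^{tk}$ separately, so $\alpha_I \in \overline{I^t}$ and $\alpha_J \in \overline{J^t}$. Normality of $I$ and $J$ then yields $\alpha_I \in I^t$ and $\alpha_J \in J^t$, and hence $\alpha = \alpha_I\alpha_J \in I^t J^t = (IJ)^t$. This shows $\overline{(IJ)^t} \subseteq (IJ)^t$, and the reverse containment is trivial, so $IJ$ is normal for every $t \geq 1$.

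The main obstacle I anticipate is making the factorization $\alpha = \alpha_I \alpha_J$ genuinely clean, in particular handling variables that appear in neither $\mathcal{G}(I)$ nor $\mathcal{G}(J)$ (stray variables coming from the polynomial ring but not from the generators) and ensuring that the ``splitting'' of $\alpha^k$ across the disjoint supports is forced rather than merely possible. The key point to justify carefully is that when $\alpha^k = (\text{product of }u_i\text{'s})\cdot(\text{product of }v_j\text{'s})\cdot(\text{leftover})$, each variable of $\alpha$ belongs to exactly one of the two disjoint support-sets, so the exponent bookkeeping separates exactly along those sets; this is where the coprimality hypothesis does all the work. One should assign each stray variable consistently to one side (say, to $\alpha_I$) so that the decomposition $\alpha = \alpha_I\alpha_J$ is well-defined and multiplicative in passing to powers. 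Once that separation is pinned down, the rest reduces to applying normality of $I$ and of $J$ independently, which is routine.

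\smallskip
\noindent\emph{Remark.} Since this lemma is quoted from \cite[Lemma 2.5]{ANKRQ}, in the paper itself one may simply cite it; the sketch above indicates the self-contained argument one would reconstruct if needed.
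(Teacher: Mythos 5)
Your proposal is correct, but note that the paper contains no internal proof to compare against: this lemma is imported verbatim from \cite[Lemma 2.5]{ANKRQ} and used as a black box, so a citation alone would suffice in context. Your self-contained reconstruction is nonetheless sound. The hypothesis $\gcd(u,v)=1$ for all $u\in\mathcal{G}(I)$, $v\in\mathcal{G}(J)$ forces $\mathrm{supp}(I)\cap\mathrm{supp}(J)=\emptyset$, which delivers both halves of your argument: the equality $I\cap J=IJ$ follows since divisibility by $u$ and $v$ means divisibility by $\mathrm{lcm}(u,v)=uv$, and the normality step separates cleanly because, writing $\alpha^k=U\,V\,\beta$ with $U$ a product of $tk$ generators of $I$ and $V$ a product of $tk$ generators of $J$, one has $\mathrm{supp}(U)\subseteq\mathrm{supp}(I)$, so $U\mid\alpha^k$ forces $U\mid\alpha_I^k$ (the $\mathrm{supp}(I)$-degrees of $\alpha^k$ and $\alpha_I^k$ agree, as $\alpha_J$ involves none of those variables), whence $\alpha_I\in\overline{I^t}=I^t$, and symmetrically $\alpha_J\in J^t$. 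The worry you flag about stray variables is genuine but resolves exactly as you say: lumping them into $\alpha_I$ only enlarges $\alpha_I$ without disturbing the divisibility $U\mid\alpha_I^k$, and the decomposition remains multiplicative under taking powers. This splitting argument is also the same mechanism the paper deploys repeatedly in its own normality proofs (Theorems \ref{I+xcH} and \ref{xn&x(n+1)}), so your reconstruction is stylistically consistent with the source.
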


%%%%%%%%%%%%%%%%%%%%%%%%%%%%
%%%%%%%%%%%%%%%%%%%%%%%%%%%%

%%%%%%%%%%%%%%%%%%%%%%%%%%%%
%%%%%%%%%%%%%%%%%%%%%%%%%%%%

\begin{lemma}\label{Normality-H}
Let $C_n$ be a cycle graph  with the vertex set $V(C_n)=\{1, \ldots, n\}$ and the edge set  $E(C_n)=\{\{x_i, x_{i+1}\}~:~ i=1, \ldots, n\},$
where $x_0$ (respectively, $x_{n+1}$) represents $x_n$ (respectively, $x_1$).
 Let $$H:=\bigcap_{j\in \{1, \ldots, n\}\setminus \{\ell_1, \ldots, \ell_k\}} (x_{j-1}, x_{j}, x_{j+1}).$$
 Then the following statements hold:
\begin{itemize}
\item[(i)]   $H$  is normal.
\item[(ii)]  $H$  has the strong persistence property.
\item[(iii)]  $H$  has the persistence property.
\end{itemize}
\end{lemma}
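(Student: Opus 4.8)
Since the chain ``normal $\Rightarrow$ strong persistence property $\Rightarrow$ persistence property'' has been recorded in Section~\ref{Preliminaries} (from \cite[Theorem 6.2]{RNA} and \cite[Proposition 2.1]{NQBM}), parts (ii) and (iii) will be immediate consequences of (i). Thus the entire content of the lemma is the normality assertion (i), and that is where I would concentrate.

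For (i) the plan is to argue by induction on the number of neighborhood primes in the intersection. Since $k\geq 1$, at least one vertex is excluded, so after a cyclic relabeling I may assume that the factor $(x_{n-1},x_n,x_{n+1})$ is \emph{absent}. Breaking the cycle at this excluded vertex turns $H$ into an intersection indexed by a subset $A=\{1,\ldots,n\}\setminus\{\ell_1,\ldots,\ell_k\}$ of $\{1,\ldots,n-1\}$, i.e.\ an interval (``path-like'') configuration, in which the excluded vertices cut $A$ into maximal runs of consecutive retained vertices (arcs). I would then peel off one boundary prime at a time. Concretely, choosing an arc endpoint $p$ and writing $H=H'\cap(x_{p-1},x_p,x_{p+1})$ with $H'$ the intersection over $A\setminus\{p\}$, I would use the elementary monomial identities $M\cap(x)=x(M:x)$ and $M\cap(y,z)=M\cap(y)+M\cap(z)$ to obtain the decomposition
\[
H=\bigl[\,H'\cap(x_{p-1},x_p)\,\bigr]+x_{p+1}\,(H':x_{p+1}).
\]
The idea is to read this as $H=I+x_{p+1}H_0$ with $I=H'\cap(x_{p-1},x_p)$ and $H_0=(H':x_{p+1})$, and then to verify its hypotheses: $H'$ is normal by induction, $H_0$ is normal because colons of normal ideals by a variable are normal \cite[Proposition 12.2.3]{V1}, the two‑variable intersection $I$ is handled by Theorem~\ref{xn&x(n+1)} (its own hypothesis reducing again to colons and Lemma~\ref{Normal-Intersection}), and $I+H_0=H'$ is normal; once the coprimality $\gcd(x_{p+1},v)=1$ holds for all $v\in\mathcal{G}(I)\cup\mathcal{G}(H_0)$, Theorem~\ref{I+hH} yields that $H$ is normal. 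Arcs of length one or two are disposed of directly in this way, while longer arcs are stripped vertex by vertex, and genuinely separated arcs (supports at cyclic distance $\geq 3$) are merged using Lemma~\ref{Normal-Intersection} and, where a whole cluster of excluded variables is split off, Proposition~\ref{I+JH}.

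The hard part, and the place where the above naive step must be repaired, is precisely the \emph{overlap of supports} of consecutive neighborhood primes. When a single vertex $e$ separates two arcs, the variable $x_e$ appears in the two flanking primes on both sides of $e$, so $x_{p+1}$ above typically still divides generators of $H'$ and the required coprimality $\gcd(x_{p+1},v)=1$ fails. My plan to overcome this is to first \emph{combine} the two neighborhood primes flanking each such gap: for instance $(x_{e-1},x_e,x_{e+1})\cap(x_e,x_{e+1},x_{e+2})$ collapses the shared variables into a single monomial generator (in the smallest case one checks $(x_e,x_1,x_2)\cap(x_1,x_2,x_3)=(x_1,x_2,x_ex_3)$), after which the excluded‑vertex variables are genuinely coprime to the arc ideals and can be collected into the monomial $h$ (or the ``unit‑distance'' ideal $J$) to which Theorem~\ref{I+hH} and Proposition~\ref{I+JH} apply. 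Carrying out this pre‑combination systematically, arc by arc along the broken cycle, reduces $H$ at each stage to a strictly smaller configuration of the same type together with a normal building block, so the induction closes; statements (ii) and (iii) then follow at once.
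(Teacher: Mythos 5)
Your skeleton coincides with the paper's: the paper also proves (i) by induction on the number of neighborhood primes, peels one prime $u_s$ off, splits via the monomial identities $M\cap(x)=x(M:x)$ and $M\cap(P+Q)=M\cap P+M\cap Q$, and glues with Theorem \ref{I+hH}(i); your ``pre-combination'' of the two primes flanking a gap is, after redistributing the intersection, exactly the paper's move of splitting off the shared variable (its Case 4: $H=B\cap(x_{i_s-1},x_{i_s+1})\cap(x_{\theta_1},x_{\theta_2})\cap(x_{\lambda_1},x_{\lambda_2})+x_{i_s}B$). There is a harmless slip --- with $I=H'\cap(x_{p-1},x_p)\subseteq H'\subseteq (H':x_{p+1})=H_0$ one gets $I+H_0=H_0$, not $H'$, but $H_0$ is normal by \cite[Proposition 12.2.3]{V1} anyway --- and a genuine gap: the whole difficulty of the lemma sits in the normality of the residual intersections such as $H'\cap(x_{p-1},x_p)$, $H'\cap(x_{p-1},x_{p+1})$, and, after pre-combination, blocks like $A\cap(x_a,x_b)\cap(x_c,x_d)$, and your proposal disposes of these by assertion. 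In particular, the parenthetical claim that the hypothesis of Theorem \ref{xn&x(n+1)} ``reduces again to colons and Lemma \ref{Normal-Intersection}'' is not correct: that hypothesis asks that the \emph{sum} $J\cap(x_n)+(J:x_{n+1})$ be normal, and a sum of normal monomial ideals is not normal in general, nor does any cited result make it so. The paper closes this by an explicit structural computation: it writes $J=J_1\cap(x_{i_s-3},x_{i_s-2},x_{i_s-1})\cap(x_{i_s+1},x_{i_s+2},x_{i_s+3})$ and shows the sum collapses to $J_1\cap(x_{i_s-3},x_{i_s-2},x_{i_s-1})$ with $x_{i_s-1}\notin\mathrm{supp}(J_1)$, which is a previously settled smaller case; this uses the cycle-specific facts that each variable lies in at most three of the primes and each adjacent pair in at most two. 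Without such a collapse your induction does not close: the pre-combined blocks are no longer intersections of neighborhood primes, so the promised ``strictly smaller configuration of the same type'' is not literally available, and the normality of the enlarged class of configurations is never verified.

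Two further points. First, Proposition \ref{I+JH} cannot be used to ``collect'' several gap variables at once: in the decomposition each excluded variable $x_e$ multiplies a \emph{different} ideal (the intersection with its own two flanking primes deleted), whereas $I+JH$ requires every generator of $J$ to multiply one common $H$; accordingly the paper never uses Proposition \ref{I+JH} inside this lemma (it is reserved for the center vertices in Theorem \ref{DI-Wheel}) and only ever splits off a single variable per step. Second, your opening move assumes $k\geq 1$ so that the cycle can be broken at an excluded vertex; but the lemma is invoked with the excluded set empty to obtain Corollary \ref{Normality-Cycle} ($H=DI(C_n)$), and in the full-cycle case every variable lies in three primes, so your coprimality already fails at the first peel --- this is precisely the situation the paper's overlap-size case analysis (Case 4 with its two subcases) is designed to handle uniformly. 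Both issues are repairable, but as written the proof is a plan whose deferred steps are exactly where the paper's work lies.
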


\begin{proof}
 (i) Clearly $n\geq 3$. Let $\{1, \ldots, n\}\setminus \{\ell_1, \ldots, \ell_k\} =\{i_1, \ldots, i_{n-k}\}$.
  To simplify the notation, set $u_j:=(x_{i_j-1}, x_{i_j}, x_{i_j+1})\subset R$ for each $j=1, \ldots, n-k$, where $R=K[x_1, \ldots, x_n]$. We use induction on $s:=n-k$.
 If $s=0$, then there is nothing to prove. Let $s=1$. Then $H=u_1=(x_{i_1-1}, x_{i_1}, x_{i_1+1})$, which is certainly normal. Now, let $s>1$, and the claim has been shown for all integers less than  $s$. Put $J:=\cap_{j=1}^{s-1}u_j$. Note that  the inductive hypothesis implies  $J$ is normal. Moreover,  $H=J\cap u_s$.
 Here, one may consider the following cases:
  \vspace{1mm}

\textbf{Case 1.}  $\mathrm{supp}(J) \cap \mathrm{supp}(u_s)=\emptyset$. Then   Lemma \ref{Normal-Intersection} yields that $H$ is normal.
\vspace{1mm}

\textbf{Case 2.}  $|\mathrm{supp}(J) \cap \mathrm{supp}(u_s)|=1$. According to  Corollary  \ref{Cor.vI+wJ}, one can derive that $H$ is normal.
\vspace{1mm}

\textbf{Case 3.} $|\mathrm{supp}(J) \cap \mathrm{supp}(u_s)|=2$, say $\mathrm{supp}(J) \cap \mathrm{supp}(u_s)=\{x_{a}, x_{b}\}$.    In this case, one may  consider the following subcases:
\vspace{1mm}

\textbf{Subcase 3.1.}  $|a-b|=1$, say $\mathrm{supp}(J) \cap \mathrm{supp}(u_s)=\{x_{i_s-1}, x_{i_s}\}$. This implies that
$H=J\cap (x_{i_s-1}, x_{i_s}) +  x_{i_s+1}J$.
It is routine to check that one can write
$$\displaystyle J\cap (x_{i_s-1}, x_{i_s})=\bigcap_{j=1,~ \text{where}~ x_{i_s-1} x_{i_s}\notin u_j}^{s-1}u_j \cap (x_{i_s-1},x_{i_s}).$$
The inductive hypothesis yields that
$\cap_{j=1,~ \text{where}~ x_{i_s-1} x_{i_s}\notin u_j}^{s-1}u_j$ is normal.
Notice that any two consecutive integers can be appeared at most in two $u_i$'s. Also, recall that any integer can be appeared at most in three $u_i$'s.
It follows now from   Corollary  \ref{Cor.vI+wJ} that
$\cap_{j=1,~ \text{where}~ x_{i_s-1} x_{i_s}\notin u_j}^{s-1}u_j\cap (x_{i_s-1}, x_{i_s})$ is normal too.
On account of  Theorem   \ref{I+hH}(i), we deduce  that $H$ is normal.
\vspace{1mm}

\textbf{Subcase 3.2.}  $|a-b|=2$, that is,  $\mathrm{supp}(J) \cap \mathrm{supp}(u_s)=\{x_{i_s-1}, x_{i_s+1}\}$. We thus have
$H=J\cap (x_{i_s-1}, x_{i_s+1}) +  x_{i_s}J$.
Want to show that  $J\cap (x_{i_s-1}, x_{i_s+1})$    is normal.
Our strategy is to use  Proposition  \ref{xn&x(n+1)}.    For this purpose, it is sufficient  to prove that  $J\cap (x_{i_s-1}) + (J:x_{i_s+1})$ is normal.
If $({x_{i_s-3}, x_{i_s-2}, x_{i_s-1}})$ or $({x_{i_s+1}, x_{i_s+2}, x_{i_s+3}})$ or both of them do not appear in $J$, then Corollary  \ref{Cor.vI+wJ}   yields that $J\cap (x_{i_s-1}, x_{i_s+1})$    is normal. Hence, assume that $({x_{i_s-3}, x_{i_s-2}, x_{i_s-1}})$ and  $({x_{i_s+1}, x_{i_s+2}, x_{i_s+3}})$   appear in $J$.  This means that we can write
$$J=J_1 \cap ({x_{i_s-3}, x_{i_s-2}, x_{i_s-1}})\cap  ({x_{i_s+1}, x_{i_s+2}, x_{i_s+3}}).$$
Note that the inductive hypothesis gives  that  $J_1$ is normal.
Now, we obtain
\begin{align*}
J \cap (x_{i_s-1}) + (J:x_{i_s+1})= &
J_1 \cap    (x_{i_s+1}, x_{i_s+2}, x_{i_s+3}) \cap (x_{i_s-1}) \\
& + J_1 \cap (x_{i_s-3}, x_{i_s-2}, x_{i_s-1})\\
= &J_1 \cap (x_{i_s-3}, x_{i_s-2}, x_{i_s-1}),
\end{align*}
 where $\mathrm{supp}(J_1) \cap \{x_{i_s-1}, x_{i_s+1}\}=\emptyset$.
Since $x_{i_s-1} \notin J_1$, this implies  that  the normality of $J_1 \cap (x_{i_s-3}, x_{i_s-2}, x_{i_s-1})$ can be concluded  from Case 1 or  Case 2 or Subcase 3.1. Therefore, $J \cap (x_{i_s-1}) + (J:x_{i_s+1})$ is normal. It follows from  Proposition  \ref{xn&x(n+1)} that $J\cap (x_{i_s-1}, x_{i_s+1})$    is normal. On account of  Theorem   \ref{I+hH}(i), we deduce  that $H$ is normal.

\vspace{1mm}
\textbf{Case 4.} $\mathrm{supp}(J) \cap \mathrm{supp}(u_s)=\{x_{i_s-1}, x_{i_s}, x_{i_s+1}\}$.
In this case, we can consider the following subcases:
\vspace{1mm}

\textbf{Subcase  4.1.} There exists exactly one integer $\alpha \in \{1, \ldots, s-1\}$ such that
$|\mathrm{supp}(u_\alpha) \cap \mathrm{supp}(u_s)|=2$, say $\mathrm{supp}(u_\alpha) \cap \mathrm{supp}(u_s)=\{x_{i_{s}-1}, x_{i_s}\}$. Also, there exists unique
$\beta \in \{1, \ldots, s-1\}\setminus \{\alpha\}$ such that
$x_{i_s+1} \in u_\beta$.
Assume that   $u_\beta = (x_{i_s+1}, x_r, x_t)$. Set  $A:=\cap_{j\in \{1, \ldots, s-1\}\setminus \{\beta\}}u_j$. We thus get
\begin{align*}
H=& A \cap \left((x_{i_s+1}) + (x_{i_s-1}, x_{i_s}) \cap (x_r, x_t)\right) \\
=& A\cap (x_{i_s-1}, x_{i_s}) \cap (x_r, x_t) + x_{i_s+1}A.
\end{align*}
It follows from the inductive hypothesis that $A$ is normal. Furthermore, by mimicking the argument in Case 3, one can derive that  $A\cap (x_{i_s-1}, x_{i_s}) \cap (x_r, x_t)$ is normal. We therefore conclude from Theorem \ref{I+hH}(i) that $H$ is normal.
\vspace{1mm}

\textbf{Subcase 4.2.}  There exist  exactly two distinct  integers
 $\alpha, \beta \in \{1, \ldots, s-1\}$ such that
$|\mathrm{supp}(u_\alpha) \cap \mathrm{supp}(u_s) \cap \mathrm{supp}(u_\beta)|=1$, say
$\mathrm{supp}(u_\alpha) \cap \mathrm{supp}(u_s) \cap \mathrm{supp}(u_\beta)=x_{i_s}.$
Let  $u_\alpha = (x_{i_s}, x_{\theta_1}, x_{\theta_2})$ and
$u_\beta = (x_{i_s}, x_{\lambda_1}, x_{\lambda_2})$.
Put   $B:=\cap_{j\in \{1, \ldots, s-1\}\setminus \{\alpha, \beta\}}u_j$.  This yields that
\begin{align*}
H=& B \cap \left((x_{i_s}) +   (x_{i_s-1}, x_{i_s+1}) \cap (x_{\theta_1}, x_{\theta_2}) \cap (x_{\lambda_1}, x_{\lambda_2})\right) \\
=& B\cap (x_{i_s-1}, x_{i_s+1}) \cap (x_{\theta_1}, x_{\theta_2}) \cap (x_{\lambda_1}, x_{\lambda_2}) + x_{i_s}B.
\end{align*}
One can deduce from the  inductive hypothesis that $B$ is normal. By repeating the argument in Case 3, we can deduce that
$B\cap (x_{i_s-1}, x_{i_s+1}) \cap (x_{\theta_1}, x_{\theta_2}) \cap (x_{\lambda_1}, x_{\lambda_2})$ is normal as well.
Thanks to Theorem \ref{I+hH}(i), we obtain  $H$ is normal.

This completes the inductive step, and hence  the claim  has been proved by induction.   \par
(ii) and (iii) are trivial.
\end{proof}

%%%%%%%%%%%%%%%%%%%%%%%%%%
%%%%%%%%%%%%%%%%%%%%%%%%%%
It has already been established in \cite[Theorem 3.3]{NKA} and
\cite[Theorem 1.10]{ANR} that the cover ideals of odd cycle graphs are normal and have the (strong) persistence property. On the other hand, since any even cycle is a bipartite graph, its cover ideal is normally torsion-free, and so is normal and also has the (strong) persistence property.
The next corollary which  is an immediate  consequence of Lemma \ref{Normality-H}, says that the dominating ideals of cycle graphs are normal and also satisfy the (strong) persistence property. Moreover, this corollary  will be used in proving   Theorem \ref{DI-Wheel}.

\begin{corollary} \label{Normality-Cycle}
Let $C_n$ be a cycle graph  with the vertex set $V(C_n)=\{1, \ldots, n\}$ and the edge set  $E(C_n)=\{\{x_i, x_{i+1}\}~:~ i=1, \ldots, n\},$
where  $x_{n+1}$  represents  $x_1$, and $DI(C_n)$ be its dominating ideal.
 Then the following statements hold:
\begin{itemize}
\item[(i)]   $DI(C_n)$  is normal.
\item[(ii)]  $DI(C_n)$  has the strong persistence property.
\item[(iii)]  $DI(C_n)$  has the persistence property.
\end{itemize}
 \end{corollary}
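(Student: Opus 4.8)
The plan is to recognize $DI(C_n)$ as an instance of the ideal $H$ treated in Lemma \ref{Normality-H}, namely the degenerate case in which the excluded index set $\{\ell_1,\ldots,\ell_k\}$ is empty, and then read off all three statements from that lemma together with the implication chain recorded in Section \ref{Preliminaries}. First I would make the closed neighborhood ideal fully explicit. In the cycle $C_n$ each vertex $i$ has exactly the two neighbors $i-1$ and $i+1$ (indices read cyclically, with $x_0$ standing for $x_n$ and $x_{n+1}$ for $x_1$), so $N_{C_n}[i]=\{i-1,i,i+1\}$ and hence
\[
NI(C_n)=\bigl(x_{i-1}x_ix_{i+1}: i=1,\ldots,n\bigr),
\]
a square-free monomial ideal generated in the single degree $3$.

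Next I would pass to the Alexander dual. By \cite[Lemma 2.2]{SM} we have $DI(C_n)=NI(C_n)^\vee$, and since $\mathrm{supp}(x_{i-1}x_ix_{i+1})=\{x_{i-1},x_i,x_{i+1}\}$, the Alexander dual formula recalled in Section \ref{Preliminaries} yields
\[
DI(C_n)=\bigcap_{i=1}^{n}(x_{i-1},x_i,x_{i+1}).
\]
This is precisely the ideal $H$ of Lemma \ref{Normality-H} in the case where the indexing set $\{1,\ldots,n\}\setminus\{\ell_1,\ldots,\ell_k\}$ is all of $\{1,\ldots,n\}$, i.e. $k=0$ and no prime is omitted from the intersection. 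Consequently Lemma \ref{Normality-H}(i) immediately gives normality, which is part (i). For parts (ii) and (iii) I would simply invoke the chain ``normal $\Rightarrow$ strong persistence property $\Rightarrow$ persistence property'' established in Section \ref{Preliminaries} (via \cite[Theorem 6.2]{RNA} and \cite[Proposition 2.1]{NQBM}).

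There is no genuine obstacle here, since the corollary is labeled an immediate consequence of Lemma \ref{Normality-H}; the only points requiring care are the bookkeeping of the cyclic indices $x_0=x_n$, $x_{n+1}=x_1$ so that the closed neighborhoods match the triples $(x_{j-1},x_j,x_{j+1})$ appearing in the lemma, and the verification that taking the Alexander dual of every degree-$3$ generator produces the full intersection over all of $\{1,\ldots,n\}$ with no index excluded. Once these routine identifications are in place, the statement follows verbatim from the lemma.
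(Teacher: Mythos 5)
Your proposal is correct and coincides with the paper's own (implicit) argument: the paper offers no separate proof precisely because, as you observe, $DI(C_n)=NI(C_n)^\vee=\bigcap_{i=1}^{n}(x_{i-1},x_i,x_{i+1})$ is the ideal $H$ of Lemma \ref{Normality-H} with the excluded set $\{\ell_1,\ldots,\ell_k\}$ empty, and the lemma's induction on $s=n-k$ covers this case $s=n$ without modification. Your explicit bookkeeping of the cyclic indices and of the Alexander dual computation, together with the chain normal $\Rightarrow$ strong persistence $\Rightarrow$ persistence for parts (ii) and (iii), is exactly what the paper intends.
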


%%%%%%%%%%%%%%%%%%%%%%%%%%%%
%%%%%%%%%%%%%%%%%%%%%%%%%%%%
It has already been introduced, in \cite{BBV}, the notion of
partial $t$-cover ideals of  finite simple graphs. We first recall the definition of  partial $t$-cover ideals  in the following definition.
\begin{definition} \cite[Definition 1.1]{BBV}
Suppose that $G$ is a finite simple
graph on the vertex set $V(G) = \{x_1, x_2, \ldots, x_n\}$
 with the edge set $E(G)$. Also, for any $x\in V(G)$,  let
$N(x) = \{y~:~ \{x, y\}\in  E(G)\}$ denote the set of neighbors of $x$. Fix an integer $t\geq 1$. The {\it partial $t$-cover ideal} of $G$  is the monomial ideal
$$J_t(G)=\bigcap_{x\in V(G)}\left(\bigcap_{\{x_{i_1}, \ldots, x_{i_t}\} \subseteq N(x)}(x, x_{i_1}, \ldots, x_{i_t})\right).$$
\end{definition}

%%%%%%%%%%%%%%%%%%%%%%%%%%%%
%%%%%%%%%%%%%%%%%%%%%%%%%%%%
When $t=1$, above construction is simply the cover ideal of a finite simple graph $G$. It has already been shown in \cite[Theorem 1.2]{BBV} that if $T$ is a tree, then, for any $t\geq 1$, $J_t(T)$ satisfies the persistence property. The next corollary states that the partial $2$-cover ideal of any cycle graph satisfies the persistence property as well.
\begin{corollary}
Let $C_n$ be a cycle graph  and $J_2(C_n)$ be its partial $2$-cover ideal.
 Then the following statements hold:
\begin{itemize}
\item[(i)]   $J_2(C_n)$  is normal.
\item[(ii)]  $J_2(C_n)$  has the strong persistence property.
\item[(iii)]  $J_2(C_n)$  has the persistence property.
\end{itemize}
 \end{corollary}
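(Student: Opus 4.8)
The plan is to recognize that, for a cycle graph, the partial $2$-cover ideal is nothing but the dominating ideal, so that all three assertions reduce instantly to Corollary \ref{Normality-Cycle}. Once the identity $J_2(C_n)=DI(C_n)$ is established, parts (i)--(iii) follow verbatim; indeed (ii) and (iii) are then automatic from (i) via the chain normal $\Rightarrow$ strong persistence $\Rightarrow$ persistence recorded in Section \ref{Preliminaries}.

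First I would unravel the definition of $J_2(C_n)$. Writing $V(C_n)=\{x_1,\ldots,x_n\}$ with the cyclic convention $x_0=x_n$ and $x_{n+1}=x_1$, every vertex $x_i$ has precisely the two neighbors $N(x_i)=\{x_{i-1},x_{i+1}\}$. Hence there is a single $2$-element subset of $N(x_i)$, namely $N(x_i)$ itself, and the inner intersection in the definition of $J_2$ collapses to the single monomial prime $(x_i,x_{i-1},x_{i+1})$. This gives
\[
J_2(C_n)=\bigcap_{i=1}^{n}(x_{i-1},x_{i},x_{i+1}).
\]

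Next I would identify this intersection with $DI(C_n)$ through Alexander duality. Since $N_{C_n}[i]=\{i-1,i,i+1\}$, the closed neighborhood ideal is $NI(C_n)=(x_{i-1}x_ix_{i+1}:i=1,\ldots,n)$; applying the Alexander-dual formula from Section \ref{Preliminaries} together with $DI(C_n)=NI(C_n)^{\vee}$ (\cite[Lemma 2.2]{SM}) yields $DI(C_n)=\bigcap_{i=1}^{n}(x_{i-1},x_i,x_{i+1})$. Comparing this with the display above gives $J_2(C_n)=DI(C_n)$, and Corollary \ref{Normality-Cycle} then delivers all three conclusions. I do not expect a genuine obstacle here: the only point requiring care is verifying that the inner intersection in the definition of $J_2$ really does collapse, which happens precisely because each vertex of a cycle has degree exactly two, and it is this degree condition that forces the coincidence of the two ideals. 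For graphs containing vertices of higher degree the partial $2$-cover ideal and the dominating ideal would in general differ, so the argument is genuinely special to cycles.
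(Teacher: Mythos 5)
Your proposal is correct and matches the paper's (implicit) argument exactly: the paper states this corollary without proof immediately after Corollary \ref{Normality-Cycle}, relying on precisely the identification $J_2(C_n)=\bigcap_{i=1}^{n}(x_{i-1},x_i,x_{i+1})=NI(C_n)^{\vee}=DI(C_n)$, which holds because every vertex of a cycle has degree two. You have simply written out the collapse of the inner intersection and the Alexander-duality step that the paper leaves to the reader, so parts (i)--(iii) follow from Corollary \ref{Normality-Cycle} just as you say.
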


%%%%%%%%%%%%%%%%%%%%%%%%%%%%%%%%%%%%%%%%%%%%%%%%%%%%%%%%
%%%%%%%%%%%%%%%%%%%%%%%%%%%%%%%%%%%%%%%%%%%%%%%%%%%%%%%%

We are ready to express the main result of this section in the following theorem.

\begin{theorem} \label{DI-Wheel}
Let $G$ be an $h$-wheel graph  with  rim  $R^G$ and   center  $C^G$ such that
 $V(C^G)=\{y_1, \ldots, y_h\}$ and $V(R^G)=\{x_1, \ldots, x_{2m+1}\}$, where $m\geq 2$.  Also, let  $x_{\ell_1}, \ldots, x_{\ell_k}$ be the radial vertices such that  there exist at least three consecutive numbers among them. Let $DI(G)$ denote the  dominating ideal of $G$. Then the following statements hold:
\begin{itemize}
\item[(i)]   $DI(G)$  is normal.
\item[(ii)]  $DI(G)$  has the strong persistence property.
\item[(iii)]  $DI(G)$  has the persistence property.
\end{itemize}
\end{theorem}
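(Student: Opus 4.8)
The plan is to compute $DI(G)$ explicitly through the Alexander dual description $DI(G)=NI(G)^\vee$ and then rewrite it in a form to which Proposition~\ref{I+JH} applies. Recall that $NI(G)^\vee=\bigcap_{v\in V(G)}(x_i : i\in N_G[v])$, so I would read off each closed neighbourhood from the defining properties of an $h$-wheel. Writing $Y:=(y_1,\ldots,y_h)$ for the ideal of the centre variables and $C_j:=(x_{j-1},x_j,x_{j+1})$ for $j=1,\ldots,2m+1$ (indices mod $2m+1$), conditions (1)--(3) of the definition force every centre vertex $y_p$ to satisfy $N_G[y_p]=\{y_1,\ldots,y_h\}\cup\{x_{\ell_1},\ldots,x_{\ell_k}\}$, so all centre vertices contribute the single prime $C_0+Y$ with $C_0:=(x_{\ell_1},\ldots,x_{\ell_k})$; a radial rim vertex $x_j$ contributes $C_j+Y$, and a non-radial rim vertex $x_j$ contributes $C_j$. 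Hence
\[
DI(G)=(C_0+Y)\cap\bigcap_{j\text{ radial}}(C_j+Y)\cap\bigcap_{j\text{ non-radial}}C_j.
\]

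Next I would collapse the $Y$-terms. Since the centre variables are disjoint from all the $x$-variables, a direct monomial check gives $(A+Y)\cap(B+Y)=(A\cap B)+Y$ for any monomial ideals $A,B$ in the $x$-variables; iterating this identity yields
\[
(C_0+Y)\cap\bigcap_{j\text{ radial}}(C_j+Y)=W+Y,\qquad W:=C_0\cap\bigcap_{j\text{ radial}}C_j.
\]
Setting $H:=\bigcap_{j\text{ non-radial}}C_j$, which is exactly the ideal proved normal in Lemma~\ref{Normality-H}, a second monomial check (again using that $Y$ lives in variables disjoint from $W$ and $H$) gives $(W+Y)\cap H=(W\cap H)+YH$, so that $DI(G)=(W\cap H)+YH$.

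The \emph{key identification} is $W\cap H=C_0\cap\bigcap_{j=1}^{2m+1}C_j=C_0\cap DI(C_{2m+1})$, where $DI(C_{2m+1})=\bigcap_j C_j$ is the dominating ideal of the rim cycle. Here the hypothesis enters decisively: among the radial indices there are three consecutive ones, say $x_t,x_{t+1},x_{t+2}$ are all radial, so $(x_t,x_{t+1},x_{t+2})$ is one of the intersectands defining $DI(C_{2m+1})$ and is contained in $C_0$. Therefore $DI(C_{2m+1})\subseteq(x_t,x_{t+1},x_{t+2})\subseteq C_0$, whence $W\cap H=DI(C_{2m+1})$ and
\[
DI(G)=DI(C_{2m+1})+YH.
\]
Now $DI(C_{2m+1})$ is normal by Corollary~\ref{Normality-Cycle}, $H$ is normal by Lemma~\ref{Normality-H}, and $DI(C_{2m+1})=\bigcap_j C_j\subseteq\bigcap_{j\text{ non-radial}}C_j=H$. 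Taking $J=Y=(y_1,\ldots,y_h)$, whose generators are distinct variables and are thus pairwise coprime and coprime to every $x$-variable, Proposition~\ref{I+JH} applies with $I=DI(C_{2m+1})\subseteq H$ and gives that $DI(G)$ is normal, proving (i); statements (ii) and (iii) follow from the chain normal $\Rightarrow$ strong persistence $\Rightarrow$ persistence recorded in Section~\ref{Preliminaries}.

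The main obstacle will be the bookkeeping in the first two paragraphs: confirming that the closed neighbourhoods are exactly as claimed (this is where conditions (1)--(4), and especially the common rim-neighbourhood $\{x_{\ell_1},\ldots,x_{\ell_k}\}$ of all centre vertices, are used) and verifying the two collapsing identities at the level of monomial generators. The genuinely delicate point, however, is the use of the ``three consecutive radial vertices'' hypothesis to force $DI(C_{2m+1})\subseteq C_0$: without it the factor $C_0$ would survive, $W\cap H$ would not simplify to $DI(C_{2m+1})$, and its normality would no longer be available from Corollary~\ref{Normality-Cycle}. This is exactly the step where the stated condition is indispensable.
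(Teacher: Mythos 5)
Your proposal is correct and follows essentially the same route as the paper: both derive the decomposition $DI(G)=DI(R^G)+JH$ from the intersection description of the dominating ideal, use the three-consecutive-radial-vertices hypothesis to absorb the prime $(x_{\ell_1},\ldots,x_{\ell_k})$ into the rim-cycle dominating ideal, and then invoke Lemma~\ref{Normality-H}, Corollary~\ref{Normality-Cycle}, and Proposition~\ref{I+JH} exactly as you do (your absorbing of $C_0$ at the stage $W\cap H$ rather than before distributing $J$ is only a cosmetic reordering). Your identification of the closed neighborhoods and the two collapsing identities are also implicit in the paper's chain of equalities, so the argument matches in substance.
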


 \begin{proof}
 (i)   To simplify the notation, put $F:=(x_{\ell_1}, \ldots, x_{\ell_k})S$ and
 $J:=(y_1, \ldots, y_h)S$, where $S=K[x_1, \ldots, x_{2m+1}, y_1, \ldots, y_h]$. Since there exist at least three consecutive numbers among $\ell_1, \ldots, \ell_k$, this gives rise to
 $\cap_{i=1}^k (x_{\ell_i-1}, x_{\ell_i}, x_{\ell_i+1}) \subseteq F$.
 Hence, by remembering this fact that    $x_0$ (respectively, $x_{2m+2}$) represents $x_{2m+1}$ (respectively, $x_1$),   one can deduce  the following equalities
 \begin{align*}
 DI(G)=& (F+J) \cap \bigcap_{i=1}^k \left( (x_{\ell_i-1}, x_{\ell_i}, x_{\ell_i+1})+J\right) \\
 & \cap  \bigcap_{j\in \{1, \ldots, 2m+1\}\setminus \{\ell_1, \ldots, \ell_k\}} (x_{j-1}, x_{j}, x_{j+1})  \\
 =&  \left(\bigcap_{i=1}^k (x_{\ell_i-1}, x_{\ell_i}, x_{\ell_i+1})+J\right)\cap
 \bigcap_{j\in \{1, \ldots, 2m+1\}\setminus \{\ell_1, \ldots, \ell_k\}} (x_{j-1}, x_{j}, x_{j+1})  \\
 =&  \bigcap_{j=1}^{2m+1} (x_{j-1}, x_{j}, x_{j+1}) +J\bigcap_{j\in \{1, \ldots, 2m+1\}\setminus \{\ell_1, \ldots, \ell_k\}} (x_{j-1}, x_{j}, x_{j+1})\\
  =&  DI(R^G) + J\bigcap_{j\in \{1, \ldots, 2m+1\}\setminus \{\ell_1, \ldots, \ell_k\}} (x_{j-1}, x_{j}, x_{j+1}).
  \end{align*}
 Set $I:= DI(R^G)$ and $H:=\bigcap_{j\in \{1, \ldots, 2m+1\}\setminus \{\ell_1, \ldots, \ell_k\}} (x_{j-1}, x_{j}, x_{j+1})$. Hence, one has $DI(G)=I+JH$.
 By virtue of  Lemma   \ref{Normality-H},   we obtain $H$ is normal.  Moreover, Corollary \ref{Normality-Cycle} yields that $I$ is normal.  Because $I \subseteq H$, $\mathrm{gcd}(u,v)=1$  for all $v, u \in \mathcal{G}(J)$ with $u\neq v$, and also  $\mathrm{gcd}(u,v)=1$ for all $u\in \mathcal{G%
}(I)\cup \mathcal{G}(H)$ and $v\in \mathcal{G}(J)$, one can conclude immediately from Proposition \ref{I+JH}(i) that $DI(G)$ is normal, as desired. \par
(ii) and (iii) are trivial.
 \end{proof}

%%%%%%%%%%%%%%%%%%%%%%%%%
%%%%%%%%%%%%%%%%%%%%%%%%%

 \noindent{\bf Acknowledgments.}

 We would like to thank the anonymous referee for her/his careful reading of the manuscript and useful comments.
Part of this work was carried out while Mehrdad Nasernejad, in 2021, 
was invited CNRS Researcher at the F\'e{d}\'e{r}ation de Recherche Math\'e{m}atique 
des Hauts-de-France (FR2037 CNRS), working at the Laboratoire de 
Mathématiques de Lens of the Universit\'e  d'Artois (France).

%%%%%%%%%%%%%%%%%%%%%%%%%%%%
%%%%%%%%%%%%%%%%%%%%%%%%%%%%

\end{document}